 \newtheorem{thm}{Theorem}[section]
 \newtheorem{cor}[thm]{Corollary}
 \newtheorem{lem}[thm]{Lemma}
 \newtheorem{prop}[thm]{Proposition}
 \theoremstyle{definition}
 \newtheorem{defn}[thm]{Definition}
 \newtheorem{rem}[thm]{Remark}
 \theoremstyle{definition}
 \newtheorem{ex}[thm]{Example}
\newtheorem{nota}[thm]{Notation}
 \newcommand{\M}{\mathcal{M}}
 \newcommand{\N}{\mathcal{N}}
 \newcommand{\E}{\mathcal{E}}
 \newcommand{\PP}{\mathbb{P}}
\begin{document}

\title[Grassmann secant varieties]
{Grassmann secants, identifiability, and linear systems of tensors.}

\author[E. Ballico]{Edoardo Ballico}
\address[E. Ballico]{Dept. of Mathematics,  
University of Trento, 38123 Povo (TN), Italy}
\email{ballico@science.unitn.it}

\author[A. Bernardi]{Alessandra Bernardi}
\address[A. Bernardi]{GALAAD, INRIA M\'editerran\'ee,  
BP 93, F-06902 Sophia Antipolis, France.}
\email{alessandra.bernardi@inria.fr}

\author[M.V.Catalisano]{Maria Virginia Catalisano}
\address[M.V.Catalisano]{DIPTEM - Dipartimento di Ingegneria 
della Produzione, Termo-energetica e Modelli Matematici, 
Universit\`{a} di Genova, Piazzale Kennedy, pad. D 16129 Genoa, Italy.}
\email{catalisano@diptem.unige.it}

\author[L. Chiantini ]{Luca Chiantini}
\address[L. Chiantini]{Universit\`a degli Studi di Siena, 
Dipartimento di Scienze Matematiche e Informatiche, 
Pian dei Mantellini, 44, 53100 Siena, Italy.}
\email{luca.chiantini@unisi.it}

\maketitle


\begin{abstract} For any  irreducible non-degenerate  variety $X\subset \mathbb{P}^r$, we 
give a criterion for the $(k,s)$-identifiability of $X$. If $k\leq s-1 <r$, then the $(k,s)$-identifiability holds for $X$ if and only if  the $s$-identifiability holds for the Segre product $Seg(\mathbb{P}^k\times X)$. Moreover, if the $s$-th secant variety of $X$ is not defective and it does not fill the ambient space, then we can produce a family of pairs $(k,s)$ for which the $(k,s)$-identifiability  holds for  $X$.
\end{abstract}

\section*{Introduction}
 
In 1915, in an elegant XIX-century style Italian language  (\cite[p. 97]{Ter15}), A. Terracini pointed out  that the defectiveness of the $s$-th secant varieties of a Segre product $Seg(\mathbb{P}^k\times V_d)$ between a projective space $\mathbb{P}^k$ and a Veronese surface $V_d$, is related to the fact that  the set of all $\mathbb{P}^k$'s lying in the span of $s$ independent points of $V_d$ does not have the dimension that one can expect, from an obvious count of parameters. That pioneering work (also known as Terracini's second Lemma) was followed by a series of papers by Bronowski,
who studied the simultaneous  expressions of forms in terms of given powers (\cite{Br}). Only many decades later, Terracini's analysis has been rephrased in modern terms. In 2001, C. Dionisi and C. Fontanari proved that a Terracini's like result can be formulated by replacing Veronese surface with any irreducible non-degenerate projective variety $X$ (\cite[Proposition 1.3]{DF}). In analogy with the notion of defectiveness, which is set forth for secant varieties, they utilized  the concept of $(k,s)$-{\it Grassmann defect} that holds for the varieties $X$ for which the set of all $\mathbb{P}^k$'s lying in the span of $s$ independent points of $X$ has  dimension smaller than the expected one. The Zariski closure $GS_X(k,s)$ (see Definition \ref{GrassDef} for more details) of such a set is called {\it $(k,s)$-Grassmann secant variety of $X$}
(see also \cite{ChC}). In this setting, Terracini's idea led to the following general result.

\begin{prop}\cite[Proposition 1.3]{DF} \label{CarlaClaudio}
Let $X\subset  \PP^r$ be an irreducible non-degenerate projective variety of dimension $n$. Then $X$ is $(k, s)$-defective with defect $\delta_{k,s}(X)=\delta$ if and only if $Seg(\PP^k \times X)$ is $s$-defective with defect $\delta_{s}(Seg(\PP^k \times X))=\delta$.
\end{prop}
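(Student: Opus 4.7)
The plan is to realise the proposition as a dimension count coming from a natural rational map between $\sigma_s(Seg(\PP^k\times X))$ and $GS_X(k,s)$, matching both the actual and the expected dimensions on the two sides.

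Set $V=\CC^{r+1}$ and $A=\CC^{k+1}$, so that $Seg(\PP^k\times X)\subset \PP(A\otimes V)$. First I would view a tensor $T\in A\otimes V$ as a linear map $A^{*}\to V$ and consider its image $\mathrm{Im}(T)\subset V$. A general element of $\sigma_s(Y)$, with $Y=Seg(\PP^k\times X)$, can be written as $T=\sum_{i=1}^s v_i\otimes x_i$ with $x_i\in X$, and then $\mathrm{Im}(T)\subset\langle x_1,\dots,x_s\rangle$. Under the implicit assumption $k\le s-1$ (without which $GS_X(k,s)$ is empty) the rank of a generic such $T$ is exactly $k+1$, so taking the image yields a rational map
$$\pi\colon \sigma_s(Y)\dashrightarrow \mathbb{G}(k,r),\qquad [T]\longmapsto \PP(\mathrm{Im}(T)).$$

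The second step is to identify the image and the generic fibre of $\pi$. On the one hand the inclusion $\mathrm{Im}(T)\subset\langle x_1,\dots,x_s\rangle$ shows $\pi([T])\in GS_X(k,s)$; conversely, given $\Lambda=\PP(W)$ in the open part of $GS_X(k,s)$ with $W\subset\langle x_1,\dots,x_s\rangle$, every surjection $A^{*}\twoheadrightarrow W$ produces a tensor in $A\otimes\langle x_1,\dots,x_s\rangle=\sum_i A\otimes x_i$, which for linearly independent generic $x_i$'s is a linear subspace of $\PP(A\otimes V)$ entirely contained in $\sigma_s(Y)$. Hence $\pi$ dominates $GS_X(k,s)$, and its fibre over such a $\Lambda$ is the open locus of rank-$(k+1)$ elements of $\PP(A\otimes W)$, irreducible of dimension $(k+1)^2-1$. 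Therefore
$$\dim \sigma_s(Y)=\dim GS_X(k,s)+(k+1)^2-1.$$

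To conclude, I would verify the analogous identity for the expected dimensions: a direct computation gives
$$\bigl(s(n+k+1)-1\bigr)-\bigl(sn+(k+1)(s-1-k)\bigr)=(k+1)^2-1,$$
and the parallel identity $\dim \PP(A\otimes V)-\dim \mathbb{G}(k,r)=(k+1)^2-1$ ensures that $\sigma_s(Y)$ fills its ambient space exactly when $GS_X(k,s)$ does, so in all cases the two differences of expected dimensions coincide. Subtracting yields $\delta_s(Seg(\PP^k\times X))=\delta_{k,s}(X)$, which is the content of the proposition. The only delicate point I anticipate is the fibre calculation in the second step: one must argue that for a sufficiently general $\Lambda\in GS_X(k,s)$ \emph{every} rank-$(k+1)$ element of $A\otimes W$ really lies in $\sigma_s(Y)$, not merely in its Zariski closure, and this is exactly what the inclusion $A\otimes\langle x_1,\dots,x_s\rangle\subset \sigma_s(Y)$ for generic independent points $x_i\in X$ delivers.
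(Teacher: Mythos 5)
Your argument is correct and has the same overall architecture as the paper's own proof (Theorem \ref{mappaPhi} combined with Corollary \ref{terracini}): a dominant rational map from $\sigma_s(Seg(\PP^k\times X))$ onto $GS_X(k,s)$ with general fibre of dimension $(k+1)^2-1$, followed by the check that the two expected dimensions differ by the same constant, the minima on each side being attained by corresponding terms. Your map $\pi$, sending $[T]$ to $\PP(\mathrm{Im}(T))$, is precisely the paper's $\Phi$ in the relevant range $k\le s-1$ (it is the Buczynski--Landsberg map that the paper itself identifies as ``very close to'' $\Phi$). The one genuine difference is how the fibre is computed: the paper first establishes Lemma \ref{stessiP} (a general point of $\Phi^{-1}(\Pi)$ is supported on the \emph{same} points $P_1,\dots,P_s$) and then solves a block-diagonal linear system in explicit coordinates, whereas you identify the fibre over $\PP(W)$ directly as the full-rank locus of $\PP(A\otimes W)$, using the inclusion $\PP(A\otimes\langle x_1,\dots,x_s\rangle)\subset\sigma_s(Seg(\PP^k\times X))$ for $s\le r+1$ independent general points $x_i\in X$. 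Your route is shorter and coordinate-free, and it correctly isolates the one delicate point (that the whole full-rank locus, not just its closure, lies in $\sigma_s$); the paper's coordinate computation has the compensating advantage of also handling the regime $s-1<k$, where $w=s-1$ and the fibre has dimension $s(k+1)-1$ --- a case irrelevant to this proposition, since $GS_X(k,s)$ presupposes $k\le s-1\le r$, but used elsewhere in the paper (Theorem \ref{dimsegre}).
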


Actually, it turns out that the relation stated in the previous proposition holds because of the existence of a rational map
$$\Phi: \sigma_{s}(Seg(\mathbb{P}^k\times X))\dashrightarrow   GS_X(w,s),$$
where $w= \min\{s-1,k\}   $,
 that can be precisely described, in terms of coordinates.

The map $\Phi$ determines a general relation between the dimension
of $GS_X(k,s)$ and the dimension of the $s$-th secant 
variety $\sigma_s (Seg(\PP^k\times X))$ of the Segre embedding of 
$\mathbb{P}^k\times X$ into $\mathbb{P}^{rk+r+k}$ (see Theorem \ref{mappaPhi}).
Consequently, by studying $\Phi$, one can compute the dimension of 
$\sigma_{s}(Seg(\mathbb{P}^k\times X))$, in the case of $k \geq s-1$
 (see Theorem \ref{dimsegre}).
It is worth mentioning that such a result and its consequences endorse 
Conjecture 5.5 of \cite{AB09b}.

Notice that, in the recent preprint \cite{Buczynski:2009fk}, 
J. Buczynski and J.M. Landsberg 
obtain  results on the dimension of secant varieties of Segre
embeddings
(\cite[Proposition 3.9]{Buczynski:2009fk}). Their method is based on 
the invariant properties of a rational map $\pi$, which is very close to
our $\Phi$.

Because of the natural way in which the map $\Phi$ arises, 
it is easy to guess that it could apply to the study of
other properties of Segre products. 
This is the reason why we decided to assign $\Phi$
the role of key tool in this paper.
We are able to prove, indeed, that
the map also provides a link between the {\it identifiability}
of $X$ and of $Seg(\PP^k\times X)$ (Theorem \ref{main}).

 A crucial question arising from numerous applications, above all Algebraic Statistics and Signal Processing, is whether or not a set of parameters of a given model is identifiable.
From a purely mathematical point of view, this kind of problem can be stated  in complete generality. Let  $X\subset \mathbb{P}^r$ be any irreducible non-degenerate projective variety.
Given $s$ distinct points $P_1, \ldots , P_s\in X$, fix a projective linear subspace $\Pi \subset \langle P_1, \ldots , P_s \rangle$. How many more $\mathbb{P}^{s-1}$ containing $\Pi$ can be found among those that are $s$-secants to $X$? When the answer to this question is: ``No one besides $\langle P_1, \ldots , P_s\rangle$", then $\Pi$ is said to be $X${\it -identifiable}.  Moreover, if the general  $\mathbb{P}^k$ lying in the span of $s$ independent points of $X$
is contained in a unique $\mathbb{P}^{s-1}$ $s$-secant to $X$, then we say that the $(k,s)${\it -identifiability holds for $X$} (or equivalently that {\it $X$ is $(k,s)$-identifiable}).
The identifiability properties are studied, for the case
$k=0$,  because of their many applications
(see, e.g. \cite{CL}, \cite{Kr}, \cite{DeL}, \cite{AMR}, \cite{CaCh}, \cite{MR1931400},
\cite{BC}, \cite{ChCi}, \cite{Mella}, \cite{CO}, \cite{KB}, \cite{ERSS}, \cite{MR2769871}). In that particular case we will write   {\it $s$-identifiability} instead of $(0,s)$-identifiability.

Our main contribution to this problem giving rise to new results follows
from a direct application of the map $\Phi$ above, and
is summarized in the following two theorems 
(see Theorems \ref{main} and  \ref{tre} respectively).
\\
\\
 \textbf{Theorem} {\it Let $k \leq s-1 <r$. The 
 $(k,s)$\emph{-identifiability} holds for $X$ if and only if
the $s$-identifiability holds for  $Seg(\mathbb{P}^k\times X)$.
}\\
\\
 \textbf{Theorem}
 {\it Let $s$ be an integer such that 
$r>sn+s-1$ and $X$ is not $s$-defective.
Then for all integers $k>0$, $k\leq s-1$ such that  
$$sn + (k + 1)(s-1 - k)< (k + 1)(r - k)$$
the $(k,s)$-identifiability holds for $X$.
}
\\
\\
In the particular case in which the variety $X$ itself is a standard Segre variety $Seg(\mathbb{P}(V_1)\times \cdots \times \mathbb{P}(V_t))$,  for certain  vector spaces $V_i$, then the identifiability properties allow one to deduce  many peculiar examples on linear systems of tensors $\mathcal{E}\subset \mathbb{P}(V_1 \otimes \cdots \otimes V_t)$ (see Section \ref{linsyst}). As an example (see Example \ref{4x4}),  we can also show that the general linear system of dimension $3$ of matrices of type $4\times 4$ and rank $s= 6$ is not identifiable and it is computed by exactly two sets of decomposable tensors.

Actually, Theorem  \ref{main}  can be viewed  as the identifiability version of \cite[Proposition 1.3]{DF}.
We are  persuaded that  the rational map $\Phi$  will be a key tool for
even further investigations on secant varieties and applications.  We
refer to the remark at the end of Example \ref {5.7},
which supports
our guess.

After the preliminary Section \ref{prelim} where we introduce all the needed notions in complete generality for any irreducible non-degenerate projective variety $Y$, we devote  all of Section \ref{TheMap} to a detailed description of $\Phi$.  In Section \ref{ident} we make use of $\Phi$ to study the identifiability properties of $Seg(\mathbb{P}^k\times X)$ that will be applied in Section \ref{linsyst} for the particular case of linear systems of tensors. Observe that both Theorem \ref{main} and  Theorem \ref{tre} quoted above that are the main new theorems of this paper, are obtained by a use of  $\Phi$. The same belief in the importance of $\Phi$ urges us to
recover  \cite[Prop. 3.9]{Buczynski:2009fk} via a straightforward use of $\Phi$ (Theorem  \ref{dimsegre}).   
At the end of  Section \ref{dove}, we  also show some new, interesting
consequences of them,  in the particular case of $X$ being a
Segre-Veronese variety.

\section{Preliminaries, Notation and Basic Definitions}\label{prelim}

Throughout this paper we will always work over an algebraically closed 
field $K$ of characteristic 0. 
All the definitions that we give in this section holds for any irreducible non-degenerate projective variety
$Y$  contained in $\mathbb{P}^m$. 

Let us recall the classical definition of {\it secant varieties} and the more modern concept of {\it Grassmann secant varieties}.

\begin{defn}\label{secant} 
The $s$-th {\it higher secant variety} $\sigma_{s}(Y)$ of $Y$, is
 the Zariski closure of the union of all projective linear spaces spanned by  
$s$ distinct points of $Y$:
$$\sigma_{s}(Y):= \overline{
 \bigcup_{P_{1}, \ldots , 
P_{s}\in  Y} \langle P_{1}, \ldots , P_{s} \rangle}\subset \PP^m.$$
The expected dimension of $\sigma_{s}(Y)$  is
\begin{equation} \label{expdimsigma1}
exp\dim \sigma_s(Y): = \min\{ s(\dim Y+1)-1; m\}.
\end{equation}
When $\sigma_s(Y)$ does not have the expected dimension, $Y$ is
said to be $s$-{\it defective}, and the positive integer
$$
\delta _{s}(Y): = exp\dim \sigma_s(Y)-\dim \sigma_s(Y)
$$
is called the  $s${\it-defect} of $Y$. 
\end{defn} 

\def\Terracini1{
The fact that $\sigma_s(Y)$ can have dimension smaller than 
the expected one depends on the fact that $s$ generic tangent 
spaces to $Y$ can have non trivial intersection. This is clearly 
explained by the well known Terracini's Lemma (the first one).

\begin{lem}[{\bf Terracini's Lemma}]\label{TerrLemma}
 Let $Y\subset \mathbb P^r$  be a projective, irreducible
variety, and let $P_1,\ldots ,P_s \in Y$  be $s$ generic distinct
points. Then the projectivized tangent space to 
$\sigma_{s}(Y)$ at a generic point $Q \in \langle P_1,\ldots ,
P_s \rangle$  is the linear span in $\PP^r$ of the tangent spaces 
$T_{Y, P_i}$ to $Y$ at the points $P_i$, $i=1,\ldots ,s$, hence
$$ \dim \sigma_{s}(Y) = \dim  \langle T_{Y,P_1},\ldots ,
T_{Y,P_s}\rangle.$$
\end{lem}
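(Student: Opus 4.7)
The plan is to parametrize $\sigma_s(Y)$ by a natural map from the $s$-fold product of the affine cone $\widehat{Y}$, and read off the tangent space at a general point as the projectivization of the image of the differential.

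First I would pass to affine cones in order to avoid projective bookkeeping. Let $\widehat{Y}\subset K^{r+1}$ be the affine cone over $Y$, and consider the sum morphism
$$\psi\colon \widehat{Y}\times\cdots\times\widehat{Y}\longrightarrow K^{r+1},\qquad \psi(p_1,\ldots,p_s)=p_1+\cdots+p_s,$$
whose image is dense in the affine cone $\widehat{\sigma_s(Y)}$ over $\sigma_s(Y)$. At a smooth point $(p_1,\ldots,p_s)$ with each $p_i$ a lift of the generic $P_i$, the differential is the sum map
$$d\psi(v_1,\ldots,v_s)=v_1+\cdots+v_s,$$
so its image equals $T_{\widehat{Y},p_1}+\cdots+T_{\widehat{Y},p_s}$. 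Each summand contains the line $Kp_i$ and projectivizes to the projective tangent space $T_{Y,P_i}$, so the projectivization of the full sum is exactly $\langle T_{Y,P_1},\ldots,T_{Y,P_s}\rangle$.

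Next I would invoke generic smoothness: in characteristic zero the dominant morphism $\psi$ is smooth on a nonempty open subset of $\widehat{Y}\times\cdots\times\widehat{Y}$. For $P_1,\ldots,P_s$ generic on $Y$ and $Q$ generic in $\langle P_1,\ldots,P_s\rangle$, an affine lift $q$ of $Q$ is the image of some $(p_1,\ldots,p_s)$ in this smooth locus, and therefore the image of $d\psi$ coincides with the affine tangent space $T_{\widehat{\sigma_s(Y)},q}$. Projectivizing yields $T_{\sigma_s(Y),Q}=\langle T_{Y,P_1},\ldots,T_{Y,P_s}\rangle$, and the dimension formula follows at once.

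The main obstacle I foresee is precisely the genericity step: one must ensure that a general $Q$ in the span of general $P_i$'s really does correspond to a smooth point of $\sigma_s(Y)$ and arises from a point in the smooth locus of $\psi$. This is handled by standard generic-smoothness arguments in characteristic zero, together with the fact that dominant morphisms of irreducible varieties lift general points of the target to general points of the source; every other ingredient is the routine linear-algebra observation that the sum of affine tangent spaces projectivizes to the linear join of the corresponding projective tangent spaces.
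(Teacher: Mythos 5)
This lemma is the classical (first) Terracini Lemma; the paper does not prove it at all — it is quoted as a well-known result (indeed the block containing this statement sits inside an unused macro and is only echoed in the compiled text by Remark~\ref{TerrRem}), so there is no proof in the paper to compare against. Your argument is the standard and correct one: parametrize the cone over $\sigma_s(Y)$ by the sum map $\psi$ on $s$ copies of the affine cone, identify the image of $d\psi$ with $T_{\widehat{Y},p_1}+\cdots+T_{\widehat{Y},p_s}$, and conclude by generic smoothness in characteristic $0$. Two small points deserve explicit mention to make it airtight: (1) generic smoothness applies to a dominant morphism from a \emph{smooth} source, so you should first restrict $\psi$ to the product of the smooth loci of $\widehat{Y}$ (harmless, since the $P_i$ are generic); (2) to pass from ``general point of $\sigma_s(Y)$'' to ``generic $Q$ in the span of generic $P_1,\dots,P_s$'' one should work on the irreducible incidence variety $\{(p_1,\dots,p_s,q): q\in\langle p_1,\dots,p_s\rangle\}$, which dominates both $\widehat{Y}^{\,s}$ and the secant cone, so that a general point of it is simultaneously general for both projections. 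With those remarks your proof is complete.
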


Let $P_1,\ldots ,P_s \in Y\subset \mathbb{P}^r$  be generic distinct
points as above and consider the $0$-dimensional scheme of  $s$ 2-{\it fat points} $Z\subset Y$ defined by the ideal sheaf ${\mathcal I}_{Z} = 
{\mathcal I}^2_{P_1}\cap  \cdots \cap {\mathcal I}^2_{P_s}
\subset {\mathcal O}_Y$. The  bijection between the elements of $H^0(Y,{\mathcal I}_{Z}(1))$ and the hyperplanes of $\PP^r$ containing  the linear space $ \langle 
 T_{Y,P_1},\ldots ,T_{Y,P_s}\rangle$, leads to the following important corollary.

\begin{cor} \label{TerrCor}  Let $Y\subset \mathbb{P}^r$ and $Z\subset Y$ be as above. 
Therefore
$$ \dim \sigma_{s}(Y) = r - 
\dim H^0(Y,{\mathcal I}_{Z}(1)) = H(Z,1)-1,$$
where  $H(Z,-)$ is the Hilbert function of $Z$.
\end{cor}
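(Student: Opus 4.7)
The plan is to apply Terracini's Lemma \ref{TerrLemma} to rewrite $\dim\sigma_s(Y)$ as the dimension of the linear span of tangent spaces, and then to exploit the bijection (flagged in the paragraph preceding the corollary) between $H^0(Y,\mathcal{I}_Z(1))$ and the hyperplanes containing that span, in order to convert the dimension count into the cohomological and Hilbert-function quantities on the right-hand side. In other words, the statement is a direct sheaf-theoretic repackaging of Terracini's Lemma.

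First, apply Terracini's Lemma: for generic $P_1,\dots,P_s\in Y$ one has $\dim\sigma_s(Y) = \dim\langle T_{Y,P_1},\dots,T_{Y,P_s}\rangle$. Set $L:=\langle T_{Y,P_1},\dots,T_{Y,P_s}\rangle\subseteq\mathbb{P}^r$ and $d:=\dim L$. By elementary linear algebra, the vector space of linear forms on $\mathbb{P}^r$ vanishing identically on $L$ has dimension $(r+1)-(d+1)=r-d$.

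Next, I invoke the bijection mentioned before the corollary: a linear form $\ell$ on $\mathbb{P}^r$ vanishes on the tangent space $T_{Y,P_i}$ precisely when $\ell|_Y$ vanishes at $P_i$ with multiplicity at least two on $Y$; imposing this for every $i=1,\dots,s$ is equivalent to $\ell|_Y\in H^0(Y,\mathcal{I}_Z(1))$. Non-degeneracy of $Y\subset\mathbb{P}^r$ makes the restriction map on linear forms injective, so this identifies the $(r-d)$-dimensional space of linear forms vanishing on $L$ with $H^0(Y,\mathcal{I}_Z(1))$. Combined with the previous paragraph, this yields $\dim\sigma_s(Y)=r-\dim H^0(Y,\mathcal{I}_Z(1))$.

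Finally, by definition of the Hilbert function one has $H(Z,1)=\dim K[x_0,\dots,x_r]_1-\dim I(Z)_1=(r+1)-\dim I(Z)_1$, where $I(Z)_1$ denotes the degree-$1$ piece of the homogeneous ideal of $Z$ in $\mathbb{P}^r$. Under the identification above, $\dim I(Z)_1=\dim H^0(Y,\mathcal{I}_Z(1))$, hence $H(Z,1)-1=r-\dim H^0(Y,\mathcal{I}_Z(1))$, closing the chain of equalities. The only point requiring any bookkeeping, and thus the only mild obstacle, is this identification between ambient linear forms vanishing on $Z$ and sections of $\mathcal{I}_Z(1)$ on $Y$; this is routine once non-degeneracy of the embedding is used, and beyond it the corollary has no real difficulty.
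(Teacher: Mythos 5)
Your argument is exactly the one the paper intends: the paper gives no written proof of this corollary beyond the single sentence preceding it, which points to the bijection between elements of $H^0(Y,\mathcal{I}_Z(1))$ and hyperplanes containing $\langle T_{Y,P_1},\dots,T_{Y,P_s}\rangle$, and your write-up is a faithful unpacking of that sentence via Terracini's Lemma plus linear algebra. One justification, however, needs strengthening. You assert that injectivity of the restriction map on linear forms (non-degeneracy of $Y$) \emph{identifies} the $(r-d)$-dimensional space of linear forms vanishing on $L=\langle T_{Y,P_1},\dots,T_{Y,P_s}\rangle$ with $H^0(Y,\mathcal{I}_Z(1))$; injectivity only gives an inclusion of the former into the latter. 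For the identification --- and hence for the middle term of the chain of equalities --- you also need every section of $\mathcal{I}_Z(1)$ on $Y$ to come from an ambient linear form, i.e.\ surjectivity of $H^0(\mathbb{P}^r,\mathcal{O}(1))\to H^0(Y,\mathcal{O}_Y(1))$ (linear normality of the embedding). Without it the middle equality can literally fail: for an isomorphic projection $Y\subset\mathbb{P}^3$ of the rational normal quartic and $s=1$, one has $h^0(Y,\mathcal{I}_Z(1))=3$ while only a $2$-dimensional space of ambient linear forms contains the tangent line, so $r-h^0(Y,\mathcal{I}_Z(1))=0\neq 1=\dim\sigma_1(Y)$. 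This hypothesis is left implicit in the paper, which only applies such statements to linearly normal (Segre, Veronese, Segre--Veronese) varieties; note that your first and last quantities, $\dim\langle T_{Y,P_1},\dots,T_{Y,P_s}\rangle$ and $H(Z,1)-1$ computed from the ambient homogeneous ideal $I(Z)_1$, do agree unconditionally, so the gap concerns only the cohomological reformulation in the middle.
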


From such a result we obtain the following obvious, 
but useful, remark:

\begin{rem} \label{TerrRem}  Let $Z\subset Y\subset \mathbb{P}^r$, be as above, the scheme defined by  ${\mathcal I}_{Z} = 
{\mathcal I}^2_{P_1}\cap  \cdots \cap {\mathcal I}^2_{P_s}
\subset {\mathcal O}_Y$. 
\begin{enumerate}[i)]
\item If $ H(Z,1) = r+1$, then $ \dim \sigma_{t}(Y) = r$  for all 
 $t > s$.
\item If $\ H(Z,1)  = s(n +1)$, then $ \dim \sigma_{t}(Y) =
t (n +1) -1 $ for all  $t < s$.
\end{enumerate}
\end{rem}
}

The fact that $\sigma_s(Y)$ can have dimension smaller than 
the expected one,  is clearly 
explained by the well known Terracini's Lemma (the first one). We remark here a consequence that arises when interpreting  Terracini's Lemma in terms of fat points (see \cite[Section 2]{CGG}).

\begin{rem} \label{TerrRem} Let $P_1,\ldots ,P_s \in Y\subset \mathbb{P}^m$  be generic distinct
points, consider the $0$-dimensional scheme of  $s$ 2-{\it fat points} $Z\subset Y$ defined by the ideal sheaf ${\mathcal I}_{Z} = 
{\mathcal I}^2_{P_1}\cap  \cdots \cap {\mathcal I}^2_{P_s}
\subset {\mathcal O}_Y$ and denote by $H(Z,d)$ the Hilbert function of $Z$ in degree $d$.

\begin{enumerate}[i)]
\item If $ H(Z,1) = m+1$, then $ \dim \sigma_{t}(Y) = m$  for all 
 $t \geq s$.
\item If $\ H(Z,1)  = s(\dim Y +1)$, then $ \dim \sigma_{t}(Y) =
t (\dim Y  +1) -1 $ for all  $t \leq s$.
\end{enumerate}
\end{rem}

The following definition is due to \cite{ChC}.

\begin{defn}\label{GrassDef}    
Let 
$0 \leq k \leq s-1 \leq m$
 be integers and let $\mathbb{G}(k,m)$ 
be the Grassmannian of linear $k$-spaces contained in $\PP^m$.

The {\it $(k,s)$-Grassmann secant variety of $Y$},  
denoted with $GS_Y(k,s)$, is the closure in $ \mathbb{G}(k,m)$ of the set
$$\{\Lambda \in \mathbb{G}(k, m) |   \Lambda 
\hbox{ lies in the linear span of } 
 s \ \hbox  {independent points of } Y \}.$$
\end{defn}

Notice that, for $k=0$, the Grassmann secant variety $GS_Y(k,s)$
coincides with the secant variety $\sigma_{s}(Y)$.

The expected dimension of $GS_Y(k,s)$ is, as always, the minimum between the dimension of the ambient space and the obvious count of parameters obtained by choosing $s$ points on $Y$, and a point in the Grassmannian $G(k,s-1)$ (see eg. \cite{CC08}), that is,
\begin{equation} \label{expdimGS}
exp\dim GS_Y(k,s)=\min\{s (\dim Y) + (k + 1)(s-1 - k); \ (k + 1)(m - k)\}.
\end{equation}
In analogy with the theory of classical secant varieties,  
we define the  \emph{$(k,s)$-defect} of $Y$ as the integer: 
$$
\delta_{k,s}(Y):= exp\dim GS_Y(k,s)-\dim
GS_Y(k,s). 
$$

We end this section by introducing the  concept of {\it identifiability} which will be the core of Sections \ref{ident} and \ref{linsyst}.

\begin{defn} \label{computed} Fix a linear subspace 
$\Pi\subset \PP^m$ (possibly a point) and let 
$P_1,\dots,P_s\in Y$ be distinct points. We say that $\Pi$ is 
\emph{computed by} $P_1,\dots,P_s\in Y$ if $\Pi$ belongs to the 
linear span of the points $P_i$'s.

In this case, we say that $P_1,\dots, P_s$ provide a 
{\it decomposition} of $\Pi$.

The minimum integer $s$ for which 
there exist $s$ distinct points $P_1, \ldots,$ $ P_s \in Y$ 
such that $\Pi$ is computed by $P_1,\dots,P_s$, is called the 
{\it  $Y$-rank of  $\Pi$}.  We indicate it with $r_Y(\Pi)$.
\end{defn}

\begin{defn}\label{IdentSub} Let $Y$ and  $\Pi$ be as in 
Definition \ref {computed} and let $s$ be the $Y$-rank of $\Pi$. 
We say that $\Pi$ is $Y$-\emph{identifiable} if there is 
a unique set of distinct points $\{P_1, \ldots , P_s\}\subset Y$ 
whose span contains $\Pi$.
\end{defn}

\begin{defn}\label{IProperty}  Let $Y\subset\mathbb{P}^m$ as above.  We say that the $(k,s)$\emph{-identifiability} 
holds for $Y$ if the general element of $GS_Y(k,s)$ 
has $Y$-rank equal to $s$ and it  is $Y$-identifiable. 

When $k=0$, we will often omit $k$ and we will simply 
 say that the {\it $s$-identifiability} holds for $Y$.
\end{defn}

\section{The map $\Phi$}\label{TheMap}

From now on, with $X$ we will always denote an irreducible non-degenerate projective variety of dimension $n$ contained in $\mathbb{P}^r$. For any integer $k\geq 0 $, set $N=rk+r+k$ and let $\varphi :\PP^k \times X \rightarrow \PP^N$ be the  Segre embedding of $\PP^k \times X$. The image of $\varphi$ is the Segre variety $Seg(\PP^k \times X)\subset \mathbb{P}^N$.

The aim of this section is to study a projective rational map $\Phi=\Phi(X,k,s)$  from the $s$-th secant variety $\sigma_s(\PP^k \times X)$ of  the Segre variety  $Seg(\PP^k \times X)$  into  the  Grassmann secant variety $GS_X(w,s)$, where $w= \min\{s-1,k\}$. What we do until Lemma 2.2 can be compared  with what is done in \cite{Buczynski:2009fk} for the particular case of $k\leq r$ and $s\geq k+1$. In fact, in \cite[Corollary 3.6]{Buczynski:2009fk} the authors consider the case of $k\leq r$ and they introduce a rational map $\pi$ from the projective space $\mathbb{P}^N$ to the Grassmannian $G(k,r)$ whose restriction to $\sigma_s(\PP^k \times X)$ when $s\geq k+1$ maps onto $GS_X(w,s)$.

We will give a definition of the map $\Phi$, in terms
of local coordinates. Then, we will show how it allows  to 
link the main secant properties of $Seg(\PP^k\times X)$
with the Grassmann-secant properties of $X$.

\begin{nota}
For any choice of $t$ points 
$$\mathcal {A}_i=(a_{i,0},\dots,a_{i,r})  \in K^{r+1},\quad 1 \leq i \leq t,$$ 
we denote by  $(\mathcal {A}_1, \dots, \mathcal { A}_t)$
the $t(r+1)$-uple  
$$ (a_{1,0},\dots,a_{1,r},\dots,a_{t,0},\dots,a_{t,r}) \in K^{t(r+1)}.$$

Let $(\lambda_{0}, \ldots , \lambda_{k})$ and $(x_0,\dots,x_r)$ 
be  sets of homogeneous coordinates for the  points 
${\Lambda}\in \PP^k$ and  $P \in  X$, respectively.

Consider the point $\varphi({\Lambda}, P)  \in Seg(\PP^k\times X)$, 
so that, in coordinates:
  $$\varphi({\Lambda}, P) 
  = (\lambda_0x_0,\dots, \lambda_0x_r,
 \lambda_1x_0,\dots, \lambda_1x_r,\dots,
 \lambda_kx_0,\dots, \lambda_kx_r).$$

Accordingly with the previous notation, we have:
$$\varphi({\Lambda}, P) 
=(\lambda_{0}P, \ldots , \lambda_{k} P).$$
\end{nota}

 Let $A$ be a general point in 
$\sigma_{s}(Seg(\mathbb{P}^k\times X))$. Then there exist 
$s$ distinct points $\Lambda_1, \dots, \Lambda_s \in 
\mathbb{P}^k$ and $s$ distinct points $P_1, \dots, P_s \in X$ 
such that $A \in \langle  \varphi( \Lambda_1,P_1), \dots,  
\varphi( \Lambda_s,P_s)\rangle$.

Choose  a set of homogeneous  coordinates $(a_0,\dots,a_N)$ for  $A$.
 By a suitable choice of the  homogeneous coordinates 
 $(\lambda_{i,0},\dots, \lambda_{i,k}   )$ of the  points $\Lambda_i$,
 we can write: 
\begin{multline}\nonumber
A=(a_0,\dots,a_N)=  \varphi( \Lambda_1,P_1)+ \dots+  
\varphi( \Lambda_s,P_s) =\\ \nonumber =(\lambda_{1,0}P_1, 
\ldots , \lambda_{1,k}P_1 )+ \cdots +
(\lambda_{s,0}P_s \ldots , \lambda_{s,k} P_s).
\end{multline}
In the previous notation, this is equivalent to
$$A=(\lambda_{1,0}P_1+\dots+\lambda_{s,0}P_s, \dots, 
\lambda_{1,k}P_1+\dots+\lambda_{s,k}P_s).
$$

For any general point $A$ as above, we set: 
\begin{equation}
\Phi(A) = 
\langle \lambda_{1,0}P_1+ \cdots +  \lambda_{s,0}P_s,  \dots , 
\lambda_{1,k}P_1+ \cdots +  \lambda_{s,k}P_s\rangle .
\end{equation}
Observe that, since $A$ is general, then the right side of 
the equality represents a linear space of dimension 
$w= \min\{s-1,k\}$.

We want to show that, in this way, we get indeed a rational map
$$
\Phi:\sigma_{s}(Seg(\mathbb{P}^k\times X))\dashrightarrow  
GS_X(w,s).$$
 
The map $\Phi$ is well defined, if
$$(\alpha a_0,\dots,  \alpha a_N), \quad  \alpha \in K-\{0\}
$$
is another set of homogeneous  coordinates of $A$, then 
 $$\alpha(a_0,\dots,a_N)= 
 \alpha(\lambda_{1,0}P_1+\dots+\lambda_{s,0}P_s, \dots, 
\lambda_{1,k}P_1+\dots+\lambda_{s,k}P_s),
$$
 and in this case, obviously,
$ \lambda_{1,i}P_1+\dots+\lambda_{s,i}P_s $  and  
$  \alpha(\lambda_{1,i}P_1+\dots+\lambda_{s,i}P_s)$,  
for $0 \leq i \leq k$, represent the same projective points.
 
 Moreover, if there exist points
 $\M_i=(\mu_{i,0}, \dots, \mu_{i,k}) \in \mathbb P^k$ and 
 $Q_i =(y_{i,0},\dots, y_{i,r}  ) \in X$  
such that 
$$
A  =(a_0,\dots,a_N)= \varphi( \M_1,Q_1)+ \dots+  \varphi( \M_s,Q_s),
$$
 we get the following equality of  $(r+1)(k+1)$-tuples:
\begin{multline}\nonumber
(\lambda_{1,0}P_1+\dots+\lambda_{s,0}P_s, \dots, 
\lambda_{1,k}P_1+\dots+\lambda_{s,k}P_s) =
\\ =\nonumber
(\mu_{1,0}Q_1+\dots+ \mu_{s,0}Q_s, \dots, 
\mu_{1,k}Q_1+\dots+ \mu_{s,k}Q_s).
\end{multline}
Hence  
\begin{equation}\label{duepersegre}
\lambda_{1,i}P_1+\dots+\lambda_{s,i}P_s = 
\mu_{1,i}Q_1+\dots+ \mu_{s,i}Q_s; \ \ \ i=0,\dots,k.
\end{equation}
It follows that $\Phi$ is consistent.
\medskip

Next, we give a characterization of points belonging
to the inverse image $\Phi^{-1}(\Pi)$ of a space $\Pi\in GS_X(k,s)$.

\begin{lem} \label{stessiP}
Let $w=\min\{k,s-1\}$, $s-1\leq r$
 and take a general point
$ \Pi \in GS_X(w,s)$. Assume $\Pi\subset \langle P_1,\dots,P_s
\rangle$ for $P_1, \ldots , P_s\in X$ distinct points. Let $B$ be a general element in $\Phi^{-1}(\Pi)$.
Hence there exist points $\N_1, \dots, \N_s\in  \PP^k$ 
such that   
$$B=  \varphi( \N_1,P_1)+ \dots+  \varphi( \N_s,P_s).$$
\end{lem}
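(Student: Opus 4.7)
My plan is to take a general $B\in\Phi^{-1}(\Pi)$, read off its $(r+1)(k+1)$ coordinates as $k+1$ blocks of length $r+1$, and use the inclusion $\Pi\subset\langle P_1,\dots,P_s\rangle$ to exhibit $B$ as a sum of the required form.

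First I will fix affine representatives for $B$ and for $P_1,\dots,P_s$, and write $B=(v_0,v_1,\dots,v_k)$ with $v_i\in K^{r+1}$. The condition $\Phi(B)=\Pi$ then says exactly that $\langle v_0,\dots,v_k\rangle=\Pi$ as a projective subspace. Since $\Pi\subset\langle P_1,\dots,P_s\rangle$, and since the assumption $s-1\leq r$ together with the general choice of $P_1,\dots,P_s\in X$ ensures that these points are linearly independent, each $v_i$ admits a unique expansion $v_i=\sum_{j=1}^s n_{j,i}P_j$ for scalars $n_{j,i}\in K$. Setting $\N_j:=(n_{j,0},\dots,n_{j,k})$, the definition of the Segre embedding then yields
\begin{equation*}
\sum_{j=1}^s \varphi(\N_j,P_j)=\Bigl(\sum_{j=1}^s n_{j,0}P_j,\,\dots,\,\sum_{j=1}^s n_{j,k}P_j\Bigr)=(v_0,\dots,v_k)=B.
\end{equation*}

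The main obstacle will be verifying that every $\N_j$ is nonzero, so that each really represents a point of $\PP^k$. If $\N_j=0$ for some index $j$, then every $v_i$ lies in $\langle P_1,\dots,\widehat{P_j},\dots,P_s\rangle$, which forces $\Pi\subset\langle P_1,\dots,\widehat{P_j},\dots,P_s\rangle$; that is, $\Pi\in GS_X(w,s-1)$. I would rule this out by invoking the generality of $\Pi\in GS_X(w,s)$: a general such $\Pi$ has $X$-rank equal to $s$ (otherwise one could have replaced $s$ by $s-1$ from the start), and hence cannot lie in the span of any proper subset of $\{P_1,\dots,P_s\}$. This forces each $\N_j\neq 0$, and the displayed identity above delivers the desired decomposition of $B$.
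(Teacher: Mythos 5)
Your proof is correct, and it takes a genuinely different route from the paper's. The paper never reads the coordinate blocks of $B$ directly: it starts from an arbitrary $s$-secant decomposition $B=\varphi(\M_1,Q_1)+\dots+\varphi(\M_s,Q_s)$ (available because $B$ is a general point of $\sigma_s(Seg(\PP^k\times X))$, with a priori unrelated points $Q_i\in X$), computes $\Phi(B)$ from that decomposition, equates it with $\Pi$, uses the independence of the $w+1$ vectors $\lambda_{1,j}P_1+\dots+\lambda_{s,j}P_s$ spanning $\Pi$ to expand each $\mu_{1,i}Q_1+\dots+\mu_{s,i}Q_s$ as $\sum_{j=0}^{w}\alpha_{i,j}(\lambda_{1,j}P_1+\dots+\lambda_{s,j}P_s)$, and regroups the coefficients into the $\N_i$. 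You instead exploit the intrinsic description of $\Phi$ as the span of the $k+1$ coordinate slices of $B$ --- which is exactly what the paper's coordinate formula for $\Phi$ says, and what underlies its well-definedness --- so that the inclusion $\Pi\subset\langle P_1,\dots,P_s\rangle$ hands you the decomposition at once, with no auxiliary points $Q_i$ and no change-of-basis matrix $(\alpha_{i,j})$. Your version is shorter and does not even require $B$ to lie on an honest $s$-secant span; the paper's version has the side benefit that the coefficients $\alpha_{i,j}$ reappear almost verbatim in the proof of Theorem \ref{mappaPhi} when computing $\dim\Phi^{-1}(\Pi)$. A further point in your favour: you explicitly verify that each $\N_j$ is nonzero, hence a genuine point of $\PP^k$, a step the paper's proof passes over in silence; your justification (a general $\Pi\in GS_X(w,s)$ does not fall into the span of $s-1$ points of $X$) is at the same informal level as the rest of the argument and is automatic for dimension reasons when $w=s-1$. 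One minor quibble: linear independence of the $P_j$ is needed only for the uniqueness of the coefficients $n_{j,i}$, not for their existence, so your appeal to a ``general choice'' of the $P_j$ (which are in fact given, not chosen) can simply be dropped.
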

\begin{proof}

Let us stress, before beginning the proof, the meaning of ``general'',
in our setting. With  ``general'' point $\Pi \in GS_X (w, s)$,
(or ``general'' points of the fiber $\Phi^{-1}(\Pi)$)
we mean that, among the points of the Grassmannian (resp. the fiber), 
the ones for which the statement does not hold 
consist in a set of zero measure. More specifically, they sit
in a proper algebraic subvariety of $GS_X (w, s)$ (resp. $\Phi^{-1}(\Pi)$).
We notice that the generality hypothesis on $\Pi$ and $B$
are crucial, for the argument. In fact, for example, if $\Pi$ is not
general in 
$GS_X(k,s)$, we cannot say anything on the number $s$ of points. Moreover, 
without the generality hypothesis, we cannot properly define the map
$\Phi$, itself.

By definition, we know that there are points
$Q_1,\dots,Q_s\in X$ and $\M_1,\dots,\M_s\in \mathbb{P}^k$, with
$\M_i=(\mu_{i,0},\dots,\mu_{i,k})$ for $i=1, \ldots , s$, such that
\begin{multline}\nonumber
B= \varphi( \M_1,Q_1)+ \dots+  \varphi( \M_s,Q_s) =\\ \nonumber
=(\mu_{1,0}Q_1, \ldots , \mu_{1,k}Q_1 )+ \cdots +
(\mu_{s,0}Q_s \ldots , \mu_{s,k} Q_s).
\end{multline}
Since 
\begin{multline}\nonumber
\Phi((\mu_{1,0}Q_1, \ldots , \mu_{1,k}Q_1 )+ \cdots +
(\mu_{s,0}Q_s \ldots , \mu_{s,k} Q_s) ) =\\ \nonumber
=\langle \mu_{1,0}Q_1+ \cdots +  \mu_{s,0}Q_s,  \dots , 
\mu_{1,k}Q_1+ \cdots +  \mu_{s,k}Q_s\rangle 
\end{multline}
and 
$$\Pi =
\langle \lambda_{1,0}P_1+ \cdots +  \lambda_{s,0}P_s,  \dots , 
\lambda_{1,k}P_1+ \cdots +  \lambda_{s,k}P_s\rangle 
$$
it follows that each point $\mu_{1,i}Q_1+ \cdots + 
\mu_{s,i}Q_s $, ($i=0, \ldots , k$),
 lies in the span of the points $\lambda_{1,j}P_1+ \cdots +  
 \lambda_{s,j}P_s$, ($j=0, \ldots , k$). 

By the definition of $w$ and by the generality of $\Pi$, we may assume 
that the  points $\lambda_{1,j}P_1+ \cdots +  
\lambda_{s,j}P_s$, ($j=0, \ldots , w$)  are independent.
It follows that, both for $w=s-1$ and for $w=k$, 
there are coefficients $\alpha_{i,j} \in K$,  such that
\begin{multline}\nonumber
 \mu_{1,i}Q_1+ \cdots + \mu_{s,i}Q_s = \sum_{j=0}^w\alpha_{i,j}
 (\lambda_{1,j}P_1+ \cdots +  \lambda_{s,j}P_s) =\\ \nonumber
=\left(\sum_{j=0}^w\alpha_{i,j}\lambda_{1,j}\right)P_1+ \cdots + \left(\sum_{j=0}^w\alpha_{i,j}\lambda_{s,j}\right)P_s 
\end{multline}
for  $i=0, \ldots , k$.  

So, by setting $\nu_{h,i} = \left(\sum_{j=0}^w\alpha_{i,j}
\lambda_{h,j}\right)$, we  have
$$
 \mu_{1,i}Q_1+ \cdots + \mu_{s,i}Q_s = \nu_{1,i}P_1+ 
 \cdots + \nu_{s,i}P_s \ , \ \ \ \  i=0,\dots,k.$$
 
Hence we get: 
\begin{gather}\nonumber
B=(\mu_{1,0}Q_1+\dots+ \mu_{s,0}Q_s, \dots, 
\mu_{1,k}Q_1+\dots+ \mu_{s,k}Q_s) =\\ \nonumber
=  ( \nu_{1,0}P_1+ \cdots +  \nu_{s,0}P_s, \dots ,  
\nu_{1,k}P_1+ \cdots +  \nu_{s,k}P_s )= \\ \nonumber
=( \nu_{1,0}P_1, \dots, \nu_{1,k}P_1) + \dots +( 
\nu_{s,0}P_s, \dots, \nu_{s,k}P_s) =\\ \nonumber
=  \varphi( \N_1,P_1)+ \dots+  \varphi( \N_s,P_s),
\end{gather}
where  $\N_i =(\nu_{i,0},\dots,\nu_{i,k}) \in \PP^k$
for all $i$.
\end{proof}

\section{Some consequences on the identifiability of general points}\label{ident}

The previous construction of the map $\Phi$ in Section \ref{TheMap}, as well as
Lemma \ref{stessiP}, lead to the following analogue of the main
theorem in \cite{DF}, for identifiability.

 \begin{thm} \label{main} Let $k \leq s-1 <r$. The variety $X$ is  $(k,s)$\emph{-identifiable} if and only if $Seg(\mathbb{P}^k\times X)$ is $s$-\emph{identifiable}.
\end{thm}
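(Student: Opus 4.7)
The plan is to use the rational map
$$\Phi:\sigma_s(Seg(\mathbb{P}^k\times X))\dashrightarrow GS_X(k,s)$$
constructed in Section \ref{TheMap} (note that $w=\min\{s-1,k\}=k$ under the hypothesis $k\le s-1$) together with Lemma \ref{stessiP} as a dictionary between decompositions on the two sides. Two preparatory observations come first: since $X$ is non-degenerate in $\mathbb{P}^r$ and $s\le r$, any $s$ general points of $X$ are linearly independent (so that coefficient identities can be read off componentwise); and $\Phi$ is dominant (by its very construction, every $\Pi\in GS_X(k,s)$ is the image of some decomposable $A=\sum_i\varphi(\Lambda_i,P_i)$), so a general $A\in\sigma_s(Seg(\mathbb{P}^k\times X))$ maps to a general $\Pi\in GS_X(k,s)$, and a general $B\in\Phi^{-1}(\Pi)$ over a general $\Pi$ is itself a general point of the secant variety.

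For the forward implication, I take a general $A\in\sigma_s(Seg(\mathbb{P}^k\times X))$ with two a priori decompositions
$$A=\sum_{i=1}^s\varphi(\Lambda_i,P_i)=\sum_{i=1}^s\varphi(\M_i,Q_i),$$
and apply $\Phi$. The image $\Pi=\Phi(A)$ is a general point of $GS_X(k,s)$ lying simultaneously in $\langle P_1,\dots,P_s\rangle$ and in $\langle Q_1,\dots,Q_s\rangle$, so the $(k,s)$-identifiability of $X$ forces $\{P_i\}=\{Q_i\}$ up to relabelling. Equating the two expressions of $A$ coordinatewise then gives $\sum_i\lambda_{i,j}P_i=\sum_i\mu_{i,j}P_i$ for each $j=0,\dots,k$; linear independence of the $P_i$ implies $\lambda_{i,j}=\mu_{i,j}$, hence $\Lambda_i=\M_i$, proving the $s$-identifiability of the Segre.

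For the converse, I start with a general $\Pi\in GS_X(k,s)$ admitting two decompositions $\Pi\subset\langle P_1,\dots,P_s\rangle$ and $\Pi\subset\langle Q_1,\dots,Q_s\rangle$, and pick a general $B\in\Phi^{-1}(\Pi)$. By dominance, $B$ is a general point of $\sigma_s(Seg(\mathbb{P}^k\times X))$. Applying Lemma \ref{stessiP} to each of the two decompositions of $\Pi$ produces expressions $B=\sum_i\varphi(\N_i,P_i)=\sum_i\varphi(\N_i',Q_i)$; the $s$-identifiability of $Seg(\mathbb{P}^k\times X)$ then forces these two decompositions of $B$ to agree up to permutation, and in particular $\{P_i\}=\{Q_i\}$, which is the $(k,s)$-identifiability of $X$.

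The main delicate point is the bookkeeping of what ``general'' means where. One has to check that Lemma \ref{stessiP} can legitimately be invoked twice in the converse argument---once with each of the two decompositions of $\Pi$---and that a generic element $B$ of a generic fibre remains generic in $\sigma_s(Seg(\mathbb{P}^k\times X))$; the dominance of $\Phi$ and the formal symmetry of Lemma \ref{stessiP} in the chosen decomposition are precisely what is needed. Once this is in place, the rest is linear algebra powered by the two identifiability hypotheses, with the condition $s-1<r$ entering exactly to guarantee that the coefficient system can be inverted.
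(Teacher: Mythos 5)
Your proposal is correct and follows essentially the same route as the paper: both directions hinge on the map $\Phi$, on Lemma \ref{stessiP} to transport the two decompositions of $\Pi$ to a common general point $B$ of the fibre, and on the linear independence of $s$ general points of $X$ (guaranteed by $s-1<r$ and non-degeneracy) to match the coefficients $\lambda_{i,j}=\mu_{i,j}$. The only cosmetic difference is that you argue both implications directly while the paper phrases them contrapositively, and you make explicit the genericity bookkeeping (dominance of $\Phi$, a general point of a general fibre being general in the secant variety) that the paper leaves implicit.
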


\begin{proof}
Let  $ \Pi $ be a general element of $  GS_X(k,s)$. If there exist two
different sets of distinct points $\{P_1,\dots,P_s\},\{Q_1,\dots,Q_s\}\subset X$ such that
  $$ \Pi  \subset \langle P_1,\dots,P_s   \rangle \ \ \
\hbox{ and  } \ \ \ \Pi\subset \langle Q_1,\dots,Q_s\rangle ,$$
then, by Lemma \ref{stessiP}, for a general point $B$ in 
$\Phi^{-1}(\Pi)$, we have points $\M_i$'s and $\N_i$'s in $\mathbb{P}^k$, $i=1, \ldots , s$, with: 
$$B= \varphi( \M_1,Q_1)+ \dots+  \varphi( \M_s,Q_s)
= \varphi( \mathcal N_1,P_1)+ \dots+  \varphi( \mathcal N_s,P_s).
 $$
Since $\{P_1,\dots,P_s\} \neq \{Q_1,\dots,Q_s\} $, we get that 
$B$ lies in the span of two distinct sets of points 
of $Seg(\mathbb{P}^k\times X)$.

Now let $A$ be a  general element of $ \sigma_s( 
Seg(\mathbb{P}^k\times X)  )$. If
$$   A=\varphi( \Lambda_1,P_1)+ \dots+  \varphi( \Lambda_s,P_s)
=  \varphi( \M_1,Q_1)+ \dots+  \varphi( \M_s,Q_s)
$$
are two different decompositions of $A$,
then, by the definition of $\Phi$, 
$\Pi=\Phi(A)$  lies in the  span of the two sets of 
points  $\{P_1,\dots,P_s\}$ and $\{Q_1,\dots,Q_s\}$.
It suffices to prove that these two sets of points are distinct.

Since $A$ is general, we may assume that the two sets 
of points are both independent.
Since:
$$
\lambda_{1,i}P_1+\dots+\lambda_{s,i}P_s = 
\mu_{1,i}Q_1+\dots+ \mu_{s,i}Q_s, \quad i=0,\dots,k,
$$
then $\{P_1,\dots,P_s\} = \{Q_1,\dots,Q_s\}$
implies $\lambda_{j,i}=\mu_{j,i}$ for all $j,i$ (up to re-ordering the $P_i$'s and $Q_i$'s).
This contradicts the fact that the two decompositions of $A$
are different.
\end{proof}

\begin{cor}\label{sdentros} If the codimension of $X$ is  bigger 
than $s$, then $Seg(\mathbb{P}^{s-1}\times X)$ is $s$-identifiabile.
\end{cor}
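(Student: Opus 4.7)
The proof combines Theorem~\ref{main} with a geometric fact about linear spans on $X$. First I would apply Theorem~\ref{main} with $k=s-1$: since $\mathrm{codim}\,X>s$ gives $r>n+s>s-1$, the hypothesis $k\leq s-1<r$ is satisfied, so the $s$-identifiability of $Seg(\mathbb{P}^{s-1}\times X)$ becomes equivalent to the $(s-1,s)$-identifiability of $X$. A general element $\Pi\in GS_X(s-1,s)$ has the form $\Pi=\langle P_1,\dots,P_s\rangle$ for generic $P_i\in X$, because the span of $s$ independent points of $X$ already has dimension $s-1$ and so coincides with any $(s-1)$-plane contained in it. Hence $(s-1,s)$-identifiability reduces to the set-theoretic equality
$$
X\cap\langle P_1,\dots,P_s\rangle=\{P_1,\dots,P_s\}
$$
for generic $(P_1,\dots,P_s)\in X^s$: any other $s$-subset of $X$ spanning $\Pi$ must lie in this intersection.

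I would prove this equality by induction on $s$. The base $s=1$ is trivial. For the inductive step (treating $s\geq 3$), assume generically there is an extra point $Q\in X\cap\langle P_1,\dots,P_s\rangle$ with $Q\neq P_i$ for all $i$, and consider the linear projection $\pi_Q\colon\mathbb{P}^r\dashrightarrow\mathbb{P}^{r-1}$ from $Q$. For generic $Q\in X$ the variety $X$ is not a cone with vertex $Q$, so $X'=\overline{\pi_Q(X)}$ is irreducible non-degenerate of dimension $n$ in $\mathbb{P}^{r-1}$, and $\mathrm{codim}\,X'=\mathrm{codim}\,X-1>s-1$. The points $\pi_Q(P_i)$ are generic in $X'$ and all lie in the $(s-2)$-plane $\pi_Q(\langle P_1,\dots,P_s\rangle)$ (since $Q$ is in the projected subspace), so the inductive hypothesis applied to $X'$ at level $s-1$ forces $\pi_Q(P_s)=\pi_Q(P_j)$ for some $j<s$; equivalently $P_s\in X\cap\langle Q,P_j\rangle$. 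The base case $s=2$ applied to the chord $\langle Q,P_j\rangle$ then yields $X\cap\langle Q,P_j\rangle=\{Q,P_j\}$, contradicting $P_s\notin\{Q,P_j\}$.

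The main obstacle is precisely the base case $s=2$: showing that under $\mathrm{codim}\,X>2$, the generic chord of $X$ meets $X$ in exactly two points. This is a classical ``no honest trisecant'' statement for non-degenerate irreducible varieties of sufficiently large codimension; it can be established by bounding the dimension of the trisecant locus
$$
\{(P_1,P_2,Q)\in X^3 : P_1,P_2,Q\text{ distinct and collinear}\}\subset X^3
$$
and showing that it cannot dominate $X^2$ under the projection onto the first two coordinates when $r>n+2$, by using that a generic line in $\mathbb{P}^r$ does not meet $X$ and that the specialisation to lines through two points of $X$ preserves the $0$-dimensional (hence finite) nature of the intersection with $X$ away from the diagonal.
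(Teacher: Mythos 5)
Your reduction is exactly the paper's: apply Theorem \ref{main} with $k=s-1$ (the numerical hypothesis $s-1<r$ does follow from $r-n>s$), and observe that, since $GS_X(s-1,s)$ is just the closure of the family of the spans $\langle P_1,\dots,P_s\rangle$ themselves, $(s-1,s)$-identifiability of $X$ amounts to the set-theoretic statement $X\cap\langle P_1,\dots,P_s\rangle=\{P_1,\dots,P_s\}$ for generic $P_i$. The paper stops there, declaring that last fact obvious (``enough to observe that the general $s$-secant $(s-1)$-space cuts $X$ only in $s$ points''). You go further and try to prove it, and it is in that extra part that the problems lie.

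In your inductive step, the extra point $Q\in X\cap\langle P_1,\dots,P_s\rangle$ is a function of $(P_1,\dots,P_s)$, and its locus in $X$ may well be a proper subvariety; you cannot treat $Q$ as a generic point of $X$. This undercuts two steps: the claim that $X$ is not a cone with vertex $Q$, and, more seriously, the claim that $\pi_Q(P_1),\dots,\pi_Q(P_{s-1})$ are generic in $X'=\overline{\pi_Q(X)}$ (a variety which itself varies with $Q$). The inductive hypothesis applies only to a generic tuple of points of $X'$, so ``forces $\pi_Q(P_s)=\pi_Q(P_j)$'' does not follow; likewise $\langle Q,P_j\rangle$ is not a generic chord of $X$, so the $s=2$ case cannot legitimately be invoked for it. Finally, the base case $s=2$ --- that the generic chord of a variety of codimension greater than $2$ is not a trisecant --- is the generalized trisecant lemma, a genuinely nontrivial classical statement; your sketch only argues that the chord meets $X$ in a finite set, which does not bound the cardinality by two (finiteness of the intersection is cheap and holds, e.g., for chords of a plane curve of degree $d$, which meet the curve in $d$ points; the codimension hypothesis must enter in an essential way, classically via a uniform-position/monodromy argument or a careful dimension count on the trisecant incidence variety). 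To be fair, the paper itself offers no proof of the fact you are trying to establish, so your first paragraph already reproduces everything the paper actually argues; but the induction as written does not close the gap it sets out to close.
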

\begin{proof} Enough to observe that the general $s$-secant 
$(s-1)$-space cuts $X$ only in $s$ points,
thus it is obvious that a general $(s-1)$-space contained in a  
$s$-secant $(s-1)$-space, is contained in just one of them!

Then, since  under our numerical assumptions we have $r-n >s$ 
(hence $s-1 <r$), 
we may use the previous theorem to get the conclusion.
\end{proof}

Using  Theorem 1.1 of \cite{BC}, which, in turn, is based on 
the main result of \cite{CGG} (namely Theorem 4.1), we are able to prove a criterion 
for the Grassmann identifiability.

\begin{thm}\label{tre}
Let $s$ be an integer such that 
$r>sn+s-1$ and $X$ is not $s$-defective.
Then for all integers $k>0$, $k\leq s-1$ such that  
$$sn + (k + 1)(s-1 - k)< (k + 1)(r - k)$$
the $(k,s)$-identifiability holds for $X$.
\end{thm}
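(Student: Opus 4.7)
The plan is a two-step argument. First, I would use Theorem~\ref{main} to reduce the $(k,s)$-identifiability of $X$ to the classical $s$-identifiability of the Segre product $Y := Seg(\mathbb{P}^k\times X)$. The hypotheses $k \leq s-1 < r$ of Theorem~\ref{main} are satisfied under our assumptions: $k \leq s-1$ is given, and $r > sn + s - 1 \geq s - 1$ (using $n \geq 1$, since $X$ is non-degenerate) forces $s - 1 < r$.

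Second, I would unpack the numerical inequality $sn + (k+1)(s-1-k) < (k+1)(r-k)$ to expose its geometric content. A direct expansion rewrites it as
\[
s(n+k+1) - 1 \;<\; (r+1)(k+1) - 1 \;=\; rk+r+k \;=\; N,
\]
which says precisely that the expected dimension of $\sigma_s(Y)$ is strictly less than the dimension of the ambient space $\mathbb{P}^N$. Meanwhile, the hypothesis $r > sn + s - 1$ combined with non-$s$-defectivity of $X$ ensures, via Proposition~\ref{CarlaClaudio}, that $\sigma_s(Y)$ is a proper subvariety of $\mathbb{P}^N$: that proposition converts non-$s$-defectivity of $Y$ into non-$(k,s)$-defectivity of $X$, which is a Grassmann-secant statement verifiable by a Terracini-type fat-point computation of the flavour of Remark~\ref{TerrRem}.

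Once $Y$ is shown to be non-$s$-defective with $\sigma_s(Y) \subsetneq \mathbb{P}^N$, I would invoke Theorem~1.1 of \cite{BC} (which rests on \cite[Theorem~4.1]{CGG}) to conclude $s$-identifiability of $Y$; translating back via Theorem~\ref{main} then yields the desired $(k,s)$-identifiability of $X$. The main obstacle, I expect, is the transfer of non-defectivity from $X$ to $Y$, i.e.\ controlling $\dim GS_X(k,s)$ well enough to apply Proposition~\ref{CarlaClaudio}. This transfer should go through either a direct fat-point computation on $Seg(\mathbb{P}^k\times X)$ or a fibre-dimension analysis of the map $\Phi$ via Lemma~\ref{stessiP}, which pins down $\Phi^{-1}(\Pi)$ and lets one pass from the expected dimension of $GS_X(k,s)$ to that of $\sigma_s(Y)$. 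Once this dimensional bookkeeping is secured, the conclusion is essentially a direct citation of the \cite{BC} criterion.
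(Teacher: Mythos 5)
Your overall skeleton is the same as the paper's: reduce to the $s$-identifiability of $Y=Seg(\mathbb{P}^k\times X)$ via Theorem \ref{main}, observe that the displayed inequality is exactly $s(n+k+1)-1<N$ so that $\sigma_s(Y)$ cannot fill $\mathbb{P}^N$, and then cite \cite[Theorem 1.1]{BC}. Your algebraic identification of the inequality with $\exp\dim\sigma_s(Y)<N$ is correct and is a useful explicit version of the paper's phrase ``under our numerical assumptions.'' Note, moreover, that this already gives properness of $\sigma_s(Y)$ with no defectivity input whatsoever: the actual dimension of an $s$-secant variety never exceeds $s(\dim Y+1)-1$, so $\exp\dim<N$ forces $\sigma_s(Y)\subsetneq\mathbb{P}^N$.

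The problem is your middle step, the ``transfer of non-defectivity from $X$ to $Y$,'' which you yourself flag as the main obstacle. First, the mechanism you propose does not work: Proposition \ref{CarlaClaudio} relates the \emph{Grassmann} $(k,s)$-defectivity of $X$ to the $s$-defectivity of $Seg(\mathbb{P}^k\times X)$; it says nothing about the ordinary ($k=0$) $s$-defectivity of $X$, which is what your hypothesis gives you. Non-$s$-defectivity of $X$ does not in general imply non-$(k,s)$-defectivity of $X$ (these are logically independent notions -- cf.\ the classification of $(1,2)$-Grassmann defective threefolds in \cite{MR2769871}), so you cannot run Proposition \ref{CarlaClaudio} in the direction you need without an additional argument. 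Second, and more to the point, this obstacle is illusory: the paper performs no such transfer. The hypotheses ``$r>sn+s-1$ and $X$ is not $s$-defective'' are not used to deduce a property of $Y$; together with the non-filling of $\sigma_s(Seg(\mathbb{P}^k\times X))$, they are precisely the hypotheses of \cite[Theorem 1.1]{BC}, which is a criterion for the $s$-identifiability of $Seg(\mathbb{P}^k\times X)$ stated in terms of conditions on $X$ itself. Once you check $k\le s-1<r$ (as you do) and $s(n+k+1)-1<N$, the citation closes the proof directly; no control of $\dim GS_X(k,s)$ or of the fibres of $\Phi$ is required at this stage.
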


\begin{proof} Theorem \ref{main} says that  
$(k,s)$-identifiability holds for $X$
when $s$-identifiability holds for $Seg(\mathbb{P}^k\times X)$.
Under our numerical assumptions, the $s$-secant variety of
$Seg(\mathbb{P}^k\times X)$ cannot cover the linear span of 
$Seg(\mathbb{P}^k\times X)$. Thus we may apply Theorem 1.1
of \cite{BC}, and conclude that 
$Seg(\mathbb{P}^k\times X)$ is $s$-identifiable.
\end{proof}

Indeed, the proof of Theorem \ref{main} can be
enhanced, to give the following, more precise
result:

\begin{prop} Let $w,\Pi,B$ be as in Lemma \ref{stessiP}.
The following two sets:
$$ \E(\Pi)=\{(P_1,\dots,P_s)\in X^s\; |\; \Pi\subset\langle P_1,\dots,P_s
\rangle\}$$
\begin{multline}\nonumber
 \E(B)=\{(P_1,\dots,P_s)\in X^s\; | \; \exists\; \N_1,\dots
,\N_s \in \mathbb{P}^k \hbox{ with } B\in\langle (\N_1,P_1),\dots,
(\N_s,P_s)\rangle\}
\end{multline}
have the same cardinality.
\end{prop}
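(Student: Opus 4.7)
My strategy is to prove the stronger statement $\E(\Pi)=\E(B)$ as subsets of $X^s$, from which the equality of cardinalities is immediate. The two inclusions are of quite different natures: one is a direct unwinding of the definition of $\Phi$, the other is essentially a repackaging of Lemma \ref{stessiP}.

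For the inclusion $\E(B)\subseteq \E(\Pi)$, I would argue purely from the formula defining $\Phi$. Given $(P_1,\dots,P_s)\in \E(B)$ with witnessing parameters $\N_i=(\nu_{i,0},\dots,\nu_{i,k})\in \mathbb{P}^k$, the relation $B\in \langle\varphi(\N_1,P_1),\dots,\varphi(\N_s,P_s)\rangle$ lets me rescale the $\N_i$'s so that, as tuples of representatives,
$$B=\varphi(\N_1,P_1)+\dots+\varphi(\N_s,P_s)=\Bigl(\sum_i\nu_{i,0}P_i,\dots,\sum_i\nu_{i,k}P_i\Bigr).$$
Substituting this decomposition into the definition of $\Phi$ gives $\Pi=\Phi(B)=\langle\sum_i\nu_{i,0}P_i,\dots,\sum_i\nu_{i,k}P_i\rangle$, and each generator lies in $\langle P_1,\dots,P_s\rangle$. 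Consequently $\Pi\subseteq \langle P_1,\dots,P_s\rangle$, that is, $(P_1,\dots,P_s)\in \E(\Pi)$.

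For the reverse inclusion $\E(\Pi)\subseteq \E(B)$, I would simply invoke Lemma \ref{stessiP}: any $(P_1,\dots,P_s)\in \E(\Pi)$ satisfies its hypotheses, so the lemma produces $\N_1,\dots,\N_s\in \mathbb{P}^k$ with $B=\varphi(\N_1,P_1)+\dots+\varphi(\N_s,P_s)$, whence $B\in\langle(\N_1,P_1),\dots,(\N_s,P_s)\rangle$ and thus $(P_1,\dots,P_s)\in \E(B)$.

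The main obstacle I anticipate is ensuring that Lemma \ref{stessiP} applies uniformly to \emph{every} element of $\E(\Pi)$, rather than merely to one preferred decomposition of $\Pi$. Inspecting its proof, the only role played by the generality hypotheses is to guarantee linear independence of the $w+1$ vectors $\sum_i\lambda_{i,j}P_i$ that span $\Pi$; this is a property of $\Pi$ paired with the chosen decomposition, and not of the particular $B$ chosen in $\Phi^{-1}(\Pi)$. Since $\Pi$ is general in $GS_X(w,s)$, this independence may be assumed to hold along every $(P_1,\dots,P_s)\in\E(\Pi)$, so that the lemma delivers the required $\N_i$'s in all cases. The equality $\E(\Pi)=\E(B)$ is then established, and the statement on cardinalities follows at once.
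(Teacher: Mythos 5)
Your proof is correct and follows essentially the same route as the paper's: the inclusion $\E(\Pi)\subseteq\E(B)$ comes from Lemma \ref{stessiP} and the reverse inclusion from unwinding the definition of $\Phi$ on a decomposition of $B$, which is exactly the two-way argument in the proof of Theorem \ref{main} that the paper's own one-line proof invokes. The only divergence is in the independence caveat: you assert that generality of $\Pi$ forces the $s$ points to be independent for \emph{every} element of $\E(\Pi)$, whereas the paper more cautiously claims this only for all elements when the sets are finite and for infinitely many elements when they are infinite --- but since the statement concerns cardinality only, your slightly stronger (and not fully justified) set-theoretic equality is not needed and the conclusion stands either way.
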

\begin{proof} Almost immediate, following the proof of Theorem
\ref{main}. The unique
warning is that the set $\{P_1,\dots,P_s\}$ that we
use in the argument, must be 
independent. Since $\Pi,B$ are general,
they turn out to be independent
for {\it all} the elements
of $\E(B)$ or $\E(\Pi)$, when these sets are finite,
and for infinitely many elements, when they are infinite. 
\end{proof}

To be even more precise, the sets $\E(\Pi)$ and $\E(B)$ 
can be endowed with a quasi-projective structure and Lemma 
\ref{stessiP} shows indeed that
there exists a birational map $\E(\Pi)\to\E(B)$.

We will not explore this point of view any further, 
because we do not need it in the sequel.

 \section{Linear systems of tensors}\label{linsyst}
 
 In this section, we collect some consequences of the  previous 
 theory, trying to explain properly its range of application. 
\smallskip

We consider a vector space $V$ over $K$ of tensors of type $n_1+1, \ldots , n_t+1$, namely $V=V_1\otimes \cdots \otimes V_t$ 
where $V_i$  is a vector space of dimension $n_i+1$, for $i=1, \ldots , t$. A {\it linear
system} of tensors is just a linear subspace of $V$.
In the projective setting, tensors of type 
$n_1+1,\dots,n_t+1$, up to scalar multiplication, determine 
a projective space $\PP^M$, where $M= (\Pi_{i=1}^t (n_i+1))-1$.
A linear system of tensors is a linear subspace $\E$ of $\PP^M$.

We take the {\it dimension} of $\E$ to be the 
{\it projective} dimension of the linear subspace associated
to $\E$ (i.e. the affine dimension, minus $1$).
 
Inside the space of tensors, there is the subvariety $X$
of {\it decomposable tensors}, which corresponds
to the Segre embedding $X=Seg(\PP^{n_1}\times\dots\times\PP^{n_t})
\subset \PP^M$.
 
\begin{defn}
We say that the linear system $\E$ of tensors is
{\it computed} by $s$ decomposable tensors 
$P_1,\dots ,P_s\in X$ if for all $P \in \E$ there are scalars $a_1,\dots, a_s$
such that:
$$ P = a_1P_1+\cdots +a_sP_s.$$
Geometrically, this means that the linear space associated to
$\E$ lies in the span of the points $P_1,\dots, P_s$.

We say that $\E$ has {\it rank $s$} if $s$ is the minimum
such that there are $s$ tensors in $X$ which compute $\E$.

We say that a linear system $\E$ of rank $s$ is {\it identifiable}
if there exists a unique set of $s$ decomposable tensors, 
that compute $\E$.

We say that tensors of type $n_1+1,\dots,n_t+1$ are
{\it $(k,s)$-identifiable} if the general linear system $\E$
of such tensors, of dimension $k$ and rank $s$, is
identifiable. 
\end{defn}
 
It is immediate to see that the previous terminology
is consistent with the general terminology
of the paper, once one considers the linear
subspace associated to a linear system
(see also Definition 1.1 of \cite{Buczynski:2009fk}).
\smallskip

The map $\Phi$ constructed in the previous sections
maps a tensor $P$ of type $k+1,n_1+1,\dots,n_t+1$ to 
a linear system of dimension $k$ of tensors of
type $n_1+1,\dots,n_t+1$.
Roughly speaking, the map takes the tensor
$T$ to the linear space generated by its $k+1$ slices
along the first direction.

Thus, all the results in the previous section apply
to the identifiability of linear systems of tensors.
In particular, for instance, we see that:

\begin{rem}\label{sys} \
\begin{itemize}
\item[(i)]
The general linear systems of dimension $k$ of tensors
of type $n_1+1,\dots,n_t+1$ has rank $s$
if and only if $s$ is the minimum such that the secant variety
$\sigma_s(\PP^{k}\times\PP^{n_1}\times\dots\times\PP^{n_t})$
covers the projective space $\PP^N$, $N=(M+1)(k+1)-1$.
\item[(ii)]
There are exactly $q$ sets of decomposable tensors
that compute a general linear system of tensors of type
$n_1+1,\dots,n_t+1$ if and only if there are exactly $q$
decomposable tensors that compute a general
tensors of type $k+1,n_1+1,\dots,n_t+1$.  
\item[(iii)]
Tensors of type of type $n_1+1,\dots,n_t+1$
are $(k,s)$-identifiable if and only if
tensors of type $k+1,n_1+1,\dots,n_t+1$ are
$s$-identifiable.  
\end{itemize}
\end{rem} 
 
Let us see how the previous remarks allows to translate some known
facts about tensors to facts about linear systems of tensors. The next two examples are actually consequences of the gluing of the main results of \cite{CGG} and \cite{BC}.

\begin{ex} {\it For $m>4$, the general linear pencil of tensors of
type $2\times \dots\times 2$, ($m$-times) has rank 
$\lceil 2^m/(m+1)\rceil$.

The general linear pencil as above, of rank $s\leq 2^{m-1}/m$,
is identifiable.}

The first fact follows from the main result in \cite{CGG} (namely Theorem 4.1) that computes the dimension of the secant varieties of the Segre embedding of $\mathbb{P}^1\times \cdots \times \mathbb{P}^1$. The value $\lceil 2^m/(m+1)\rceil$ corresponds to the order of the secant variety that fills the ambient space. This result together with Remark \ref{sys} (i) proves the first fact. 

The second �fact follows from Remark \ref{sys} (ii) and the main result of \cite{BC} (Theorem 1.1) which shows that the Segre product of $(m+1)>5$ copies of $\mathbb{P}^1$'s is $k$-identifiable as soon as $\lceil 2^m/(m+1)\rceil$.
\end{ex}

\begin{ex} {\it The general linear pencil of tensors of
type $2\times 2\times 2\times 2$ has rank $6$.

The general linear pencil of tensors of
type $2\times 2\times 2\times 2$, of rank $s<5$,
is identifiable.

The general linear pencil of tensors of
type $2\times 2\times 2\times 2$, of rank $5$,
is {\rm NOT} identifiable: it is computed by exactly
two sets of decomposable tensors.}

Just use the main results in \cite{CGG} (Theorem 4.1) that gives the order of the secant variety of the Segre variety that covers the ambient space,
and Proposition 4.1 of \cite{BC} that explicitly says that the product of 5 copies of $\mathbb{P}^1$ is not 4-identifiable and moreover that through a general
point of the fifth secant variery one finds exactly two 5-secant, 4-spaces. These two results together with our Remark \ref{sys} leads to the example.
\end{ex}

There are also results for linear systems
of matrices, which, as far as we know,
cannot be found in the classical literature hence we quote the next two examples as surprising  new facts on matrices.

\begin{ex} {\it 
The general linear system of rank $s$ and dimension $c-1$,
of matrices of type $a\times b$, with $a\leq b\leq c$,
is identifiable, as soon as $s\leq ab/16$.}

It follows from the main result in \cite{CO} (Theorem 1.1) applied to our Remark \ref{sys} (iii). In fact  \cite[Theorem 1.1]{CO} states that the general tensor of $V_1\otimes V_2 \otimes V_3$ of rank $k$ has a unique decomposition if $k\leq 2^{\alpha +\beta -2}$ where $\alpha,\beta$ are the maximal integers such that $2^{\alpha}\leq \dim V_1$ and $2^{\beta}\leq \dim V_2$ and $\dim V_1 \leq \dim V_2 \leq \dim V_3$.
\end{ex}

\begin{ex}\label{4x4} {\it The general linear system of dimension $3$
of matrices of type $4\times 4$ has rank $7$.

The general linear system of dimension $3$
of matrices of type $4\times 4$ and rank $s<6$
is identifiable.

The general linear system of dimension $3$
of matrices of type $4\times 4$ and rank $s=6$
is {\rm NOT} identifiable: it is computed by exactly
two sets of decomposable tensors.}

Use the main results in \cite{AOP}, and   \cite[Theorem 1.3]{CO}. In particular \cite[Example 3.18]{AOP} shows that the Segre embedding of $\mathbb{P}^3\times \mathbb{P}^3\times \mathbb{P}^3$ is never defective, then its $7$-th secant variety fills the ambient space. While \cite[Theorem 1.3]{CO} says that a general tensor in $\mathbb{C}^4\times \mathbb{C}^4\times \mathbb{C}^4$ of rank 6 has exactly two decompositions. These two results glued together with our Remark \ref{sys} give the example.
\end{ex}

Tons of similar results, about the identifiability of linear
systems of tensors, can be found by rephrasing, from the
point of view of Remark \ref{sys} the examples
that the reader can find in \cite{Kr}, \cite{DeL}, \cite{Li},
\cite{CGG}, \cite{BC}, \cite{AOP}, \cite{CO}, \cite{BCO} etc.

We will not expound further on this subject.

\section{Some consequences on the dimension of secant
 varieties of Segre varieties}\label{dove}

The construction introduced with the map $\Phi$ in Section \ref{TheMap},
as well as the obvious remark at the beginning of the proof of Corollary
\ref{sdentros}, is indeed useful for the study of many aspects  of Segre
products. In this section, we would like to point out how the study of
the map can be used to determine the dimension of some secant variety.

Notice that the results of our Theorem \ref{dimsegre} correspond to Corollary 3.2 and Proposition 3.9 of 
 \cite{Buczynski:2009fk}.
The method of J. Buczynski and J.M. Landsberg is based on 
the invariant properties of a rational map $\pi$ (see \cite{Buczynski:2009fk} [Corollary3.6]), which is very close to
our $\Phi$.

We add this section because  we would like to re-organize the results,
showing how they follow from an elementary coordinate-based examination
of the map $\Phi$.

 \begin{thm} \label{mappaPhi}  Assume, as always,
$w=\min\{k,s-1\}$ and $s-1 \leq r$.
 Then we have:
$$\dim\sigma_s (Seg(\PP^k\times X))= 
\dim GS_X(w,s)+(w+1)(k+1)-1.
$$
 \end{thm}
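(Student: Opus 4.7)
\noindent\textit{Proof sketch.} The plan is to apply the fiber dimension formula
\[
\dim\sigma_s(\mathrm{Seg}(\PP^k\times X))=\dim\overline{\Phi(\sigma_s(\mathrm{Seg}(\PP^k\times X)))}+\dim\Phi^{-1}(\Pi_{\mathrm{gen}})
\]
to the rational map $\Phi$ constructed in Section \ref{TheMap}. This reduces the theorem to proving (i) $\Phi$ is dominant onto $GS_X(w,s)$, and (ii) the generic fiber of $\Phi$ has dimension $(w+1)(k+1)-1$.

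For (i), take any $\Pi\in GS_X(w,s)$ contained in $\langle P_1,\ldots,P_s\rangle$, pick $k+1$ vectors $v_0,\ldots,v_k$ in the affine cone $\hat\Pi$ that span $\hat\Pi$ (possible since $\dim\hat\Pi=w+1\leq k+1$), expand them as $v_j=\sum_i\lambda_{i,j}P_i$, and set $\Lambda_i=(\lambda_{i,0},\ldots,\lambda_{i,k})$. Then $B=\sum_i\varphi(\Lambda_i,P_i)$ lies in $\sigma_s(\mathrm{Seg}(\PP^k\times X))$ and $\Phi(B)=\Pi$ by the very definition of $\Phi$. For (ii), fix a general $\Pi$ together with a decomposing tuple $(P_1,\ldots,P_s)$; the hypothesis $s-1\leq r$ makes the $P_i$'s linearly independent for a generic choice. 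By Lemma \ref{stessiP}, every general $B\in\Phi^{-1}(\Pi)$ has the form $B=\sum_i\varphi(\mathcal N_i,P_i)$ with these same $P_i$'s and variable $\mathcal N_i=(\nu_{i,0},\ldots,\nu_{i,k})$. Independence of the $P_i$'s makes $B$ (up to scalar) encode an $s\times(k+1)$ matrix $N=(\nu_{i,j})$, and the condition $\Phi(B)=\Pi$ becomes: each of the $k+1$ columns of $N$ lies in the $(w+1)$-dimensional subspace $\hat\Pi\subset K^s$ (using the identification $\langle P_1,\ldots,P_s\rangle\cong K^s$ via the basis $P_i$), and the $k+1$ columns together span $\hat\Pi$. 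This locus is open in $\hat\Pi^{k+1}\cong K^{(w+1)(k+1)}$, giving $\dim\Phi^{-1}(\Pi)=(w+1)(k+1)-1$ after projectivising.

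The main obstacle is justifying step (ii) uniformly, even when $X$ is not $(w,s)$-identifiable. In that situation the set $\mathcal E(\Pi)$ of decomposing $s$-tuples is positive-dimensional, and one might worry that distinct tuples in $\mathcal E(\Pi)$ contribute distinct irreducible components of $\Phi^{-1}(\Pi)$. Lemma \ref{stessiP} is exactly the tool that neutralises this concern: fixing \emph{any} single tuple already suffices to parametrise the entire generic fiber, so $\Phi^{-1}(\Pi)$ depends only on $\Pi$ (intrinsically, it is an open subset of $\PP(K^{k+1}\otimes\hat\Pi)$, independent of the chosen decomposition).
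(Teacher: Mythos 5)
Your proposal is correct and follows essentially the same route as the paper: both establish dominance by lifting a spanning set of the cone over $\Pi$ expanded in the basis $P_1,\dots,P_s$, and both compute the generic fiber dimension by invoking Lemma \ref{stessiP} to fix the decomposing tuple and then parametrising the fiber by the $(w+1)(k+1)$ free scalars $\nu_{i,j}$ (equivalently, the $\alpha_{i,j}$). The only cosmetic difference is that the paper normalises the $P_i$ to coordinate points and writes the membership condition as an explicit block-diagonal linear system of rank $(w+1)(k+1)$, whereas you phrase the same count intrinsically via the identification $\langle P_1,\dots,P_s\rangle\cong K^s$.
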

 
\begin{proof}
Let $ \Pi $ be a general element of $GS_X(w,s)$, that is,
  $\Pi$ is a $w$-space contained in $\langle P_1,\dots,P_s\rangle ,$
where the $P_i$ are  independent  points of $X$.
  
If we prove that 
$\dim \Phi^{-1}(\Pi)= (w+1)(k+1)-1$, we are done. 

Even if $w<k$, we can fix scalars $ \lambda_{i,j} \in K$, 
with $i=1,\dots, s$ and $j=0,\dots, k$, such that
$$
\Pi =
\langle \lambda_{1,0}P_1+ \cdots +  \lambda_{s,0}P_s,  \dots , 
\lambda_{1,k}P_1+ \cdots +  \lambda_{s,k}P_s\rangle  .
$$
Consider the points $\Lambda_i= (\lambda_{i,0},\ldots, 
\lambda_{i,k})\in\mathbb P^k$, and let
\begin{equation} \label{spuntisegre}
A= \varphi( \Lambda_1,P_1)+ \dots+  \varphi( \Lambda_s,P_s) \in 
\sigma_{s}(Seg(\mathbb{P}^k\times X)).
\end{equation}
Obviously $A  \in \Phi^{-1} (\Pi)$ and, for a general choice
of the scalars, $A$ will be a general point  of $\Phi^{-1} (\Pi)$.
 
 Since $s\leq r+1$,   without loss of generality, 
we may assume that the $P_i$ are coordinate  points, say
\[
 P_1 = (1,0, \ldots ,0), P_2 = (0,1,\ldots ,0), 
 \dots ,P_s = (0, \ldots,0,1,\dots,0).
\]
With this choice of coordinates, it is easy to see that
$$\Phi (A)
=\langle
(\lambda_{1,0},\lambda_{2,0},  \dots,  \lambda_{s,0},0,  
\dots,0), \dots , (\lambda_{1,k}, \lambda_{2,k} ,\dots,  
\lambda_{s,k},0,  \dots,0) \rangle ,
$$

Now, fix another general point $B\in\Phi^{-1} (\Pi)$.
By Lemma \ref{stessiP}, we know that there are points 
$\M_i= (\mu_{i,0},\ldots, 
\mu_{i,k})\in\PP^k$ with
$$B=\varphi( \M_1,P_1)+ \dots+  \varphi( \M_s,P_s) $$
and so:
$$   \Phi (B) =  \langle
(\mu_{1,0}, \mu_{2,0},  \dots,  \mu_{s,0},0,  \dots,0), 
\dots ,
(\mu_{1,k}, \mu_{2,k} ,\dots,  \mu_{s,k},0,  \dots,0) \rangle.
$$
 
Since  $\Phi (A) = \Phi (B)$, in the case $w=k \leq s-1$, 
it follows that  each point  
$(\mu_{1,i}, \mu_{2,i},  \dots,  \mu_{s,i},0,  \dots,0)$, 
($ i=0, \ldots , k) ,$   lies in the span of the $k+1$ points 
 $(\lambda_{1,j},\lambda_{2,j},  \dots,  \lambda_{s,j},0,  
 \dots,0)$,  ($ j=0, \ldots , k ).$

In case $w=s-1<k$,  each point 
$(\mu_{1,i}, \mu_{2,i},  \dots,  
\mu_{s,i},0,  \dots,0)$ , ($i=0, \ldots , k $),
  lies in the span of  $w+1$ independent points among 
  the $k+1$ points    $(\lambda_{1,j},\lambda_{2,j}, 
   \dots,  \lambda_{s,j},0,  \dots,0)$, 
($ j=0, \ldots , k )$, and we may assume that these $w+1$ 
independent points are  $(\lambda_{1,j},\lambda_{2,j},  
\dots,  \lambda_{s,j},0,  \dots,0)$, with $ j=0, \ldots , w .$

In other words, there exist
 $(w+1)(k+1)$ elements  $\alpha_{i,j} \in K $ s.t.
$$(\mu_{1,i}, \mu_{2,i},  \dots,  \mu_{s,i},0,  \dots,0)=
 \sum_{j=0}^w \alpha_{i,j} (\lambda_{1,j},\lambda_{2,j},  
 \dots,  \lambda_{s,j},0,  \dots,0),
$$
where $i =0, \dots,k$.

Equivalently, the following linear system 
\[
\left (
\begin{matrix}
M & 0 & 0 & \dots & 0&0 \\
0 & M & 0 &  \dots & 0 &0  \\
\dots & \dots & \dots &\dots & \dots  & \dots\\
0 & \dots &0 &\dots & 0&M  \\
\end{matrix}
\right)
\left (
\begin{matrix}
\alpha_{0,0} \\
\dots  \\
\alpha_{0,k} \\
\alpha_{1,0} \\
\dots  \\
\alpha_{1,k} \\
\dots \\
\alpha_{k,0} \\
\dots  \\
\alpha_{k,k} \\
\end{matrix}
\right)=
\left (
\begin{matrix}
\mu_{1,0} \\
\dots  \\
\mu_{s,0} \\
\mu_{1,1} \\
\dots  \\
\mu_{s,1} \\
\dots \\
\mu_{1,k} \\
\dots  \\
\mu_{s,k} \\
\end{matrix}
\right)
\]
where
$M=
\left (
\begin{matrix}
\lambda_{1,0} & \dots & \lambda_{1,k } \\
\lambda_{2,0} & \dots & \lambda_{2,k } \\
\dots & \dots & \dots  \\
\lambda_{s,0} & \dots & \lambda_{s,k } \\
\end{matrix}
\right) $, has solutions. Since $A$ is general, 
the rank of the coefficient matrix of this linear system  
is $(w+1)(k+1)$.

Now, since 
$$B=  \varphi( \M_1,P_1)+ \dots+  \varphi( \M_s,P_s)$$
$$
=(\mu_{1,0}P_1, \ldots , \mu_{1,k}P_1 )+ \cdots +
(\mu_{s,0}P_s \ldots , \mu_{s,k} P_s) 
$$
$$=(\mu_{1,0},\mu_{2,0}\dots,  \mu_{s,0},0,\dots,0, 
\mu_{1,1},\mu_{2,1}\dots,  \mu_{s,1},0,\dots,0,$$
$$
\dots\dots,
\mu_{1,k},\mu_{2,k},\dots,  \mu_{s,k},0,\dots,0
),
$$
it immediately  follows  that the  dimension of 
$\Phi^{-1}(\Pi)$ is $(w+1)(k+1)-1$.
\end{proof}

{The previous argument shows that the map $\Phi$ has positive dimensional
fibers, in general.
It is interesting to observe that, nevertheless,  the structure of the
fibers yields that the identifiability
of $\Phi(A)$ implies the identifiability of $A$.
}

As  an easy consequence of Theorem \ref{mappaPhi}, 
we get  the following Terracini-type theorem (proved in \cite{DF}):

\begin{cor} \label{terracini} 
Let $k \leq s-1< r$. Then $X$ is $(k, s)$-defective with 
defect $\delta_{k,s}(X)=\delta$ if and only if 
$Seg(\mathbb{P}^k \times X)$ is $s$-defective with defect 
$\delta_{s}(Seg(\mathbb{P}^k \times X))=\delta$.
\end{cor}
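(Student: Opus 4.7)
The plan is to deduce this directly from Theorem \ref{mappaPhi}, since under the hypothesis $k \leq s-1$ we have $w = \min\{k, s-1\} = k$, so that theorem gives
$$\dim \sigma_s(Seg(\mathbb{P}^k \times X)) = \dim GS_X(k,s) + (k+1)^2 - 1.$$
If I can establish the corresponding identity for the expected dimensions, namely
$$exp\dim \sigma_s(Seg(\mathbb{P}^k \times X)) = exp\dim GS_X(k,s) + (k+1)^2 - 1,$$
then subtracting yields $\delta_s(Seg(\mathbb{P}^k \times X)) = \delta_{k,s}(X)$, which is exactly the statement.

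To verify the expected-dimension identity, I would unpack both definitions. On one side, $Seg(\mathbb{P}^k \times X)$ has dimension $k+n$ and sits in $\mathbb{P}^N$ with $N = (k+1)(r+1)-1$, so its expected $s$-secant dimension is $\min\{s(k+n+1)-1,\; (k+1)(r+1)-1\}$. On the other side, from formula \eqref{expdimGS} we have $exp\dim GS_X(k,s) = \min\{sn + (k+1)(s-1-k),\; (k+1)(r-k)\}$. The key computation is to check that the two candidate quantities match after adding $(k+1)^2 - 1$: in the "parameter count" branch,
$$sn + (k+1)(s-1-k) + (k+1)^2 - 1 = sn + (k+1)s - 1 = s(k+n+1) - 1,$$
and in the "ambient space" branch,
$$(k+1)(r-k) + (k+1)^2 - 1 = (k+1)(r+1) - 1.$$
So both possibilities are compatible with the formula.

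The one subtle point — which I expect to be the main technical obstacle, though it is really elementary — is to ensure that the two minima choose the same branch simultaneously, so the formula does not fail in a "mixed" regime. For this I would compare the inequalities: $(k+1)(r-k) \leq sn+(k+1)(s-1-k)$ simplifies to $(k+1)(r+1) \leq s(n+k+1)$, and this is exactly the condition $(k+1)(r+1)-1 \leq s(n+k+1)-1$ that selects the ambient branch for $\sigma_s(Seg(\mathbb{P}^k\times X))$. Hence the two minima always agree in which branch is active, and the desired identity on expected dimensions holds unconditionally under our hypotheses $k \leq s-1 < r$. Combining this with Theorem \ref{mappaPhi} gives $\delta_{k,s}(X) = \delta_s(Seg(\mathbb{P}^k\times X))$, finishing the proof.
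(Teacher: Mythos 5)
Your proposal is correct and follows exactly the paper's route: the paper also deduces the corollary from Theorem \ref{mappaPhi} (with $w=k$, giving the shift $k^2+2k=(k+1)^2-1$) together with "a direct computation" showing the expected dimensions differ by the same constant — which is precisely the branch-by-branch check you carry out (and note that once both entries of each minimum are shifted by the same constant, the agreement of the active branches is automatic). Nothing further is needed.
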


\begin{proof}

{By Theorem \ref{mappaPhi}, and 
  a direct computation we  have
$$
\dim (\sigma_s (Seg(\mathbb{P}^k\times X)))-
\dim( GS_X(k,s))  =k^2 +2k
$$
$$=
exp\dim (\sigma_s (Seg(\mathbb{P}^k\times X)))-
exp\dim( GS_X(k,s))
$$
and we are done.}
\end{proof}
\medskip

Next, we get some  results about the defectivity 
or non-defectivity of  the $s$-th  higher secant variety 
of  $Seg(\PP^k\times X)$.

\begin{lem} \label {lemma1} 

For $s-1 < k $, and $s-1 \leq r$, 
we have
$$\dim \sigma_s(Seg(\PP^k \times X)) 
=\min \{ s(k+n+1)-1 ; s(k+r-s+2)-1 \} $$
\end{lem}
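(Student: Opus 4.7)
The strategy is to reduce to a computation of $\dim GS_X(s-1,s)$ via Theorem \ref{mappaPhi}, and then bound this Grassmann secant dimension both from above and from below.

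First, since $s-1<k$, we have $w=\min\{k,s-1\}=s-1$. The hypothesis $s-1\le r$ permits us to invoke Theorem \ref{mappaPhi}, which yields
$$\dim\sigma_s(Seg(\PP^k\times X))=\dim GS_X(s-1,s)+s(k+1)-1. \qquad (\star)$$
So the entire task is to verify that
$$\dim GS_X(s-1,s)=\min\{sn,\; s(r-s+1)\}. \qquad (\star\star)$$

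For the two obvious upper bounds, $GS_X(s-1,s)$ is by definition the closure of the image of the rational span map
$$\rho:X^s\dashrightarrow\mathbb{G}(s-1,r),\qquad (P_1,\dots,P_s)\mapsto\langle P_1,\dots,P_s\rangle,$$
which is defined on the nonempty open subset of $X^s$ on which the $P_i$'s are linearly independent (nonempty since $s\le r+1$). This already gives $\dim GS_X(s-1,s)\le sn$, and the containment $GS_X(s-1,s)\subseteq\mathbb{G}(s-1,r)$ of dimension $s(r-s+1)$ gives the second upper bound. Substituting into $(\star)$, we obtain the inequality $\le$ in the statement.

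For the matching lower bound, I would split into two regimes. When $sn\le s(r-s+1)$, the goal is to show $\rho$ is generically finite, so that $\dim GS_X(s-1,s)=sn$. The fiber of $\rho$ over a generic $\Pi=\langle P_1,\dots,P_s\rangle$ consists of ordered $s$-tuples whose entries lie in $X\cap\Pi$; using the non-degeneracy of $X$ together with a standard dimension count, a generic such $\Pi$ meets $X$ only in the $s$ spanning points, so the fiber consists of the $s!$ permutations. When $sn\ge s(r-s+1)$, a dimension count shows $\rho$ is dominant, hence $GS_X(s-1,s)=\mathbb{G}(s-1,r)$ and its dimension equals $s(r-s+1)$. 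Plugging $(\star\star)$ into $(\star)$ gives
$$\dim\sigma_s(Seg(\PP^k\times X))=\min\{sn,\;s(r-s+1)\}+s(k+1)-1=\min\{s(k+n+1),\;s(k+r-s+2)\}-1,$$
as required.

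The main obstacle is the lower bound for $\dim GS_X(s-1,s)$ in the subcritical regime, i.e.\ verifying that the span map is generically finite: one needs to rule out the possibility that for generic $(P_1,\dots,P_s)\in X^s$ the $(s-1)$-plane $\Pi=\langle P_1,\dots,P_s\rangle$ cuts $X$ along a positive-dimensional locus. Equivalently, via a Terracini-type tangent computation at a general $s$-tuple on $Seg(\PP^k\times X)$, one must show that the pairwise interactions of the affine tangent spaces $\lambda_i\otimes\hat T_{P_i,X}+K^{k+1}\otimes P_i$ contribute exactly the expected $s$ relations $\langle\lambda_i\otimes P_i\rangle$; the fact that the $\lambda_i$'s can be chosen linearly independent (because $s\le k<k+1$) is precisely what kills the ``extra'' intersections coming from potential defectivity of $X$ itself, and is the crucial ingredient making $(\star\star)$ hold for arbitrary non-degenerate $X$.
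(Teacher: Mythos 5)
Your reduction is exactly the paper's: since $s-1<k$ forces $w=s-1$, Theorem \ref{mappaPhi} gives $\dim\sigma_s(Seg(\PP^k\times X))=\dim GS_X(s-1,s)+s(k+1)-1$, and everything rests on the identity $\dim GS_X(s-1,s)=\min\{sn,\,s(r-s+1)\}$. The difference is how that identity is handled: the paper simply cites it as known from \cite[Section 2]{CC08}, whereas you attempt to prove it. Your upper bounds are fine, and your two-regime plan for the lower bound (generic finiteness of the span map $\rho$ when $n\le r-s+1$, dominance when $n\ge r-s+1$) has the right shape. One caution: your intermediate claim that a generic $\Pi=\langle P_1,\dots,P_s\rangle$ meets $X$ \emph{only} in the $s$ spanning points, so that the fiber is exactly the $s!$ permutations, fails at the boundary $n=r-s+1$ of the subcritical regime: for instance, for a non-degenerate curve of degree $d>3$ in $\PP^3$ with $s=3$, the plane through three general points of the curve meets it in $d$ points, so the fiber of $\rho$ has more than $3!$ elements. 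Finiteness of the fiber (which is all you actually need) does still hold there, but it is not a formality --- it is essentially the generalized trisecant lemma, i.e.\ precisely the content of the non-defectivity of $GS_X(s-1,s)$ that the paper outsources to \cite{CC08}. So either cite that result, as the paper does, or prove finiteness of $X\cap\Pi$ for general $\Pi$ in the whole range $s\le r-n+1$ honestly; the closing Terracini-style remark in your plan gestures at this but does not yet supply the argument.
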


\begin {proof}
By Theorem \ref{mappaPhi} we get 
$$\dim  \sigma_s  (Seg(\PP^k\times X))= \dim GS_X(s-1,s)
 +s(k+1)-1.
$$

Since it is well known, (see, for instance, \cite[Section 2]{CC08}), 
that the dimension of $ GS_X(s-1,s)$ is the smallest 
between $sn$ and  the dimension of the Grassmannian 
$\mathbb{G}(s-1,r)$, the conclusion easily follows.
\end {proof}

\begin{thm} \label{dimsegre} Let $X\subset \mathbb{P}^r$ be an irreducible non-degenerate projective variety of dimension $n$.

\begin {itemize}
\medskip
\item [(i)] If $s-1 \geq r$,  then
 $$\sigma_s(Seg(\PP^k \times X)) =\PP^N,$$
so $\sigma_s(Seg(\PP^k \times X))$ is not defective.
\medskip

\item [(ii)]  Let $s -1<\min \{ r ; \ k\}$;
\begin{itemize}
\medskip

\item [(a)] if   $s-1 \leq r-n $, then
$$\dim \sigma_s(Seg(\PP^k \times X)) =s(k+n+1)-1,$$
and $\sigma_s(Seg(\PP^k \times X))$ is not defective;
\medskip

\item [(b)] if   $s -1 > r-n $,  then
$$\dim \sigma_s(Seg(\PP^k \times X)) =s(k+r-s+2)-1,$$
and $\sigma_s(Seg(\PP^k \times X))$ is  defective.
\end{itemize}
\medskip

\item[(iii)] If  $s-1=k < r , $  then
$$\dim \sigma_s(Seg(\PP^k \times X)) =
\min \{ s(k+n+1)-1 \ , \ N\},$$
and $\sigma_s(Seg(\PP^k \times X))$ is not defective.
\medskip

\item[(iv)] If $k < s-1 < r , $  then
$$\dim  \sigma_s(Seg(\PP^k\times X))= \dim GS_X(k,s)
 +k^2+2k.
$$
\end {itemize}
\end{thm}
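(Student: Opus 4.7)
The plan is to derive each of the four parts from Theorem~\ref{mappaPhi} (together with Lemma~\ref{lemma1} where applicable), with a separate elementary argument for part (i) because Theorem~\ref{mappaPhi} requires $s-1\leq r$. The unifying observation is that, whenever $s-1\leq r$, Theorem~\ref{mappaPhi} reduces the computation of $\dim\sigma_s(Seg(\PP^k\times X))$ to that of $\dim GS_X(w,s)$ through the identity $\dim\sigma_s(Seg(\PP^k\times X))=\dim GS_X(w,s)+(w+1)(k+1)-1$ with $w=\min\{k,s-1\}$. Parts (ii), (iii), (iv) are then handled by identifying $\dim GS_X(w,s)$ in each regime, followed by a short arithmetic comparison with the expected dimension $\min\{s(n+k+1)-1,\,N\}$.

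For part (i), since $\sigma_s(Seg(\PP^k\times X))\supseteq\sigma_{r+1}(Seg(\PP^k\times X))$ for $s\geq r+1$, it suffices to treat $s=r+1$. I would then apply Theorem~\ref{mappaPhi} with $w'=\min\{k,r\}$, and exploit the non-degeneracy of $X$: any $r+1$ general points of $X$ are linearly independent and span $\PP^r$, so every $w'$-plane of $\PP^r$ lies in such a span and hence $GS_X(w',r+1)=\mathbb{G}(w',r)$, of dimension $(w'+1)(r-w')$. A one-line computation $(w'+1)(r-w')+(w'+1)(k+1)-1=(w'+1)(r+k-w'+1)-1=N$ covers both subcases $w'=k\leq r$ and $w'=r<k$, so $\sigma_s=\PP^N$ and no defect can occur.

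For part (ii) the hypothesis $s-1<\min\{k,r\}$ is exactly the range of Lemma~\ref{lemma1}, and its $\min$ formula specializes to (a) or (b) according to the sign of $(s-1)-(r-n)$. The defectivity claim in (b) needs two verifications: the strict inequality $s(k+n+1)-1>s(k+r-s+2)-1$, equivalent to $s-1>r-n$; and $N>s(k+r-s+2)-1$, which follows from the identity $(r+1)(k+1)-s(k+r-s+2)=(r-s+1)(k-s+1)$, positive because $s-1<\min\{k,r\}$. Part (iv), with $k<s-1<r$, is literally Theorem~\ref{mappaPhi} rewritten, since $w=k$ and $(k+1)^2-1=k^2+2k$.

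Part (iii), the borderline case $s-1=k<r$, is again a direct application of Theorem~\ref{mappaPhi} with $w=k=s-1$. Here $GS_X(s-1,s)$ is the closure of the image of $X^s$ in $\mathbb{G}(s-1,r)$ under the span map, so $\dim GS_X(s-1,s)=\min\{sn,\,s(r-s+1)\}$; adding $s(k+1)-1=s^2-1$ yields $\min\{sn+s^2,\,s(r+1)\}-1=\min\{s(k+n+1)-1,\,N\}$, the expected dimension, which forces non-defectivity. The only real obstacle in the whole argument is the exclusion $s-1>r$ from Theorem~\ref{mappaPhi}, overcome in part (i) by dropping to $s'=r+1$ and using the non-degeneracy of $X$; apart from that, everything is bookkeeping on the min formulas.
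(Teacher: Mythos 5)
Your proposal is correct and follows essentially the same route as the paper: parts (ii)--(iv) are exactly the paper's reduction to Theorem~\ref{mappaPhi} and Lemma~\ref{lemma1} followed by the same arithmetic identities (including $(r+1)(k+1)-s(k+r-s+2)=(r-s+1)(k-s+1)$), and part (i) is reduced to $s=r+1$ via the non-degeneracy of $X$ just as in the paper. The only harmless variations are that in (i) the paper finishes by writing a general point of $\PP^N$ directly as $\varphi(\Lambda_1,P_1)+\dots+\varphi(\Lambda_s,P_s)$ with the $P_i$ chosen as coordinate points, whereas you instead observe $GS_X(w',r+1)=\mathbb{G}(w',r)$ and feed this into Theorem~\ref{mappaPhi}, and that in (iii) the equality $\dim GS_X(s-1,s)=\min\{sn,\,s(r-s+1)\}$ is not automatic from the span map alone but is the known non-defectivity result the paper cites from \cite[Section 2]{CC08}, which you should invoke likewise.
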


\begin {proof}

(i) It is enough to prove this case for $s-1=r$. 
Let $P_1, \dots, P_s$ be independent points in $X$. 
We may assume that $
 P_1 = (1,0, \ldots ,0), P_2 = (0,1,\ldots ,0), 
 \dots ,P_s = (0, \ldots,0,1).
$
Hence for a general point $A=(\lambda_{1,0},\lambda_{2,0},  \dots,  \lambda_{s,0},  
\dots, \lambda_{1,k}, \lambda_{2,k} ,\dots,  \lambda_{s,k})  \in \PP^N$ we have 
$A= \varphi( \Lambda_1,P_1)+ \dots+  \varphi( \Lambda_s,P_s), $ where $\Lambda_i =(\lambda_{i,0}, \dots, \lambda_{i,k})
$.

(ii) By Lemma \ref{lemma1}  we immediately get the dimensions of 
$\sigma_s(Seg(\mathbb P^k \times X))$ both in case 
(a) and in case (b).

 Since in case (ii)(a) we get 
$N> s(k+n+1)-1$, 
 it follows that 
 $exp\dim \sigma_s(Seg(\PP^k \times X)) 
 =  s(k+n+1)-1,$
and so in this case $\sigma_s(Seg(\PP^k \times X))$ 
is not defective. 

In case (ii)(b) we have
$$s(k+n+1)-1-\dim \sigma_s(Seg(\PP^k \times X)) =s(n-r+s-1) >0,
$$
$$
N-\dim \sigma_s(Seg(\mathbb P^k \times X))
=(r-s+1)(k-s+1) >0.$$
Hence $\dim \sigma_s(Seg(\PP^k \times X))  < 
exp\dim \sigma_s(Seg(\PP^k \times X)) $.

(iii) For $s-1=k$, we have $ s(k+r-s+2)-1=N$, hence by Lemma 
\ref {lemma1} we get the conclusion.

(iv) Obvious from Theorem \ref{mappaPhi}.

\end {proof}

If $k  = r-n$, by applying the  theorem above we get the 
following interesting result.

\begin{cor}\label{55} \label{k=r-n}  If  $ k = r-n$, then 
$ \sigma_s (Seg(\mathbb{P}^k\times X))$ is never defective.
 \end{cor}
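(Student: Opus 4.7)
The plan is to reduce the statement to a mechanical case analysis of Theorem~\ref{dimsegre}, partitioning on which of the ranges (i)--(iv) the integer $s$ belongs to, after substituting $k=r-n$.

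Cases (i) and (ii)(a) of Theorem~\ref{dimsegre} directly assert that $\sigma_s(Seg(\PP^k\times X))$ is not defective, so those require no further work. The first observation is that case (ii)(b) is vacuous under the hypothesis $k=r-n$: the subcase hypothesis $s-1>r-n$ rewrites as $s-1>k$, which contradicts the overarching case--(ii) assumption $s-1<\min\{r,k\}\leq k$. Thus, within case (ii), only (ii)(a) can occur, and there is nothing to prove.

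Next I would address case (iii), where $s-1=k$ and so $s=k+1$. The key cancellation is that $k+n=r$ forces $s(k+n+1)-1=(k+1)(r+1)-1=N$, so the formula of Theorem~\ref{dimsegre}(iii) reduces to $\dim\sigma_s=\min\{N,N\}=N$. In particular, $\sigma_{k+1}(Seg(\PP^k\times X))=\PP^N$, and the expected dimension is also $N$, yielding non-defectivity.

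Finally, for case (iv) where $k<s-1<r$, I would appeal to monotonicity of secant varieties: any span of $k+1$ points of $Seg(\PP^k\times X)$ sits inside a span of $s\geq k+1$ such points, so $\sigma_{k+1}\subseteq\sigma_s$. The previous step then forces $\sigma_s=\PP^N$. Since $s\geq k+1$ combined with $k+n+1=r+1$ gives $s(k+n+1)-1\geq N$, the expected dimension is $N$ here as well, and non-defectivity follows. The only substantive point in the argument is the collapse $\min\{s(k+n+1)-1,\,N\}=N$ at $s=k+1$ under $k+n=r$; once that is observed, case (iii) shows the secant variety fills the ambient space, and monotonicity propagates non-defectivity to case (iv), while the hypothesis $k=r-n$ automatically rules out the only potentially defective case (ii)(b) in Theorem~\ref{dimsegre}.
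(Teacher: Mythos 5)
Your proof is correct and follows essentially the same route as the paper's: both hinge on the key cancellation at $s=k+1$, where Theorem~\ref{dimsegre}(iii) together with $k=r-n$ gives $\min\{s(k+n+1)-1,\,N\}=N$, and then propagate non-defectivity to all other $s$. The only cosmetic difference is that you invoke parts (i), (ii)(a) of Theorem~\ref{dimsegre} and the inclusion $\sigma_{k+1}\subseteq\sigma_s$ where the paper cites Remark~\ref{TerrRem}.
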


  \begin{proof}
{
  First assume $s-1=k$.
By Theorem   \ref{dimsegre} (iii) we get
  $$ \dim  \sigma_{s}  (Seg(\PP^{k}\times X))
=  \min \{ s(n+s)-1, \ s(r+1)-1 \} .
$$
Since $r-n= s-1$ we have
$$ s(n+s)-1 =  s(r+1)-1,$$ 
 $$s(n+s)-1 =   (k+1)(r+1)-1 = N ,
  $$
  $$s(r+1)-1= s(k+n+1)-1 ,$$
and  so
$$ \dim  \sigma_{s}  (Seg(\PP^{k}\times X))
=  N = s(k+n+1 )-1= s(\dim Seg(\PP^{k}\times X) +1 )-1.
$$
Now assume that $s\neq k+1$. In this case,
by
Remark \ref{TerrRem},
we get:
\begin{itemize}
\item
 for $s>k+1$, $ \dim  \sigma_{s}  (Seg(\PP^{k}\times X))
=  N$ ;
\item
 for $s<k+1$, $ \dim  \sigma_{s}  (Seg(\PP^{k}\times X))
=  s(k+n+1 )-1,$
\end{itemize}
and the conclusion  follows.}
\end{proof}

Next two examples, which are the only new spots in this section, show
how the previous analysis of the map $\Phi$, and the properties listed
above, allow us to settle some interesting facts about Segre varieties
and tensors.

\begin{ex}\label{ab}
Let $Y$ be the Segre-Veronese embedding of $ \PP^{{n+1} 
\choose 2}\times  \PP^n$ via divisors of bi-degree $(1,2)$. 
Then $\sigma_s Y$ is never defective. In fact,
let $X \subset \PP^{{n+2 \choose 2}-1}$ be the $2-$uple 
Veronese embedding of $\PP^n$. Since 
$$Y= Seg(\PP^{n+1 \choose 2}\times X).
$$
and since ${n+1 \choose 2}= {n+2 \choose 2}-1-n$, then from 
Corollary \ref{k=r-n} we get the conclusion.
\end{ex} 

We like to stress here that this proves one case of the Conjecture 5.2 of \cite{AB09a}  (see also the more general Conjecture 5.5 of \cite{AB09b}).

\begin{ex} \label{5.7} Let $Y$ be the Segre-Veronese embedding of $\mathbb{P}^k\times \mathbb{P}^{n_1}\times \cdots \times \mathbb{P}^{n_t}$ via divisors of multi-degree $(1,d_1, \ldots , d_t)$. If $k=\Pi_{i=1}^t{n_i+d_i \choose d_i}-\sum_{i=1}^t n_i -1$, Corollary \ref{k=r-n} implies that $\sigma_s(Y) $ is never defective.
\end{ex}

We end the paper with the following interesting remark pointed out by the anonymous referee that we thank for having shared it with us. In the previous example,  if we consider the particular case of $Y$ being the standard Segre embedding obtained by taking all $d_i$'s equal to 1, then we get kind of tensors of boundary format in the sense of Hyperdeterminants of Gelfand, Kapranov and Zelevinsky \cite {GKZ}[pp. 444--445].

\section*{Acknowledgments} 
All authors  were supported by MIUR funds. 
The second author was partially supported by  Project Galaad of INRIA So\-phia Antipolis M\'editerran\'ee 
(France)  and   Marie Curie Intra-European Fellowships for Career Development (FP7-PEOPLE-2009-IEF): ``DECONSTRUCT".

\bibliographystyle{alpha}


\begin{thebibliography}{\index}




\bibitem[AB09a]{AB09a}
Hirotachi Abo and Maria~Chiara Brambilla.
\newblock Secant varieties of Segre-Veronese varieties $\mathbb{P}^m \times \mathbb{P}^n$ embedded by the morphism given by $\mathcal{O}(1,2)$
\newblock {\em Experiment. Math.} 18(3):369--384, 2009.

\bibitem[AB09b]{AB09b}
Hirotachi Abo and Maria~Chiara Brambilla.
\newblock On the dimensions of secant varieties of segre-veronese varieties.
\newblock Preprint arXiv0912.4342v2, 12 2009.

\bibitem[AOP09]{AOP}
Hirotachi Abo, Giorgio Ottaviani, and Chris Peterson.
\newblock Induction for secant varieties of {S}egre varieties.
\newblock {\em Trans. Amer. Math. Soc.}, 361(2):767--792, 2009.

\bibitem[AMR09]{AMR}
Elizabeth~S. Allman, Catherine Matias, and John~A. Rhodes.
\newblock Identifiability of parameters in latent structure models with many
  observed variables.
\newblock {\em Ann. Statist.}, 37(6A):3099--3132, 2009.

\bibitem[BC11]{BC}
Cristiano Bocci and Luca Chiantini.
\newblock On the identifiability of binary segre products.
\newblock Preprint arXiv1105.3643, 05 2011.

\bibitem[BCO]{BCO}
Cristiano Bocci, Luca Chiantini, and Giorgio Ottaviani.
\newblock An inductive procedure for the identifiability of segre products.
\newblock In Preparation.

\bibitem[Br33]{Br} Jacob Bronowski. 
\newblock The sum of powers as simultaneous canonical espressions.
\newblock {\em Proc. Camb. Phyl. Soc.},  465--469, 1933.

\bibitem[BL11]{Buczynski:2009fk}
Jaroslaw Buczynski and Joseph M. Landsberg.
\newblock Ranks of tensors and a generalization of secant varieties.
\newblock Preprint arXiv 0909.4262v4, 11 2011.

\bibitem[CC03]{CaCh}
Enrico Carlini and Jaydeep Chipalkatti.
\newblock On {W}aring's problem for several algebraic forms.
\newblock {\em Comment. Math. Helv.}, 78(3):494--517, 2003.

\bibitem[CGG11]{CGG}
Maria~Virginia Catalisano, Anthony~V. Geramita, and Alessandro Gimigliano.
\newblock Secant varieties of {$\Bbb P^1\times\dots\times\Bbb P^1$}
  ({$n$}-times) are not defective for {$n\geq 5$}.
\newblock {\em J. Algebraic Geom.}, 20(2):295--327, 2011.

\bibitem[CC02]{ChCi}
Luca Chiantini and Ciro Ciliberto.
\newblock The {G}rassmannians of secant varieties of curves are not defective.
\newblock {\em Indag. Math. (N.S.)}, 13(1):23--28, 2002.

\bibitem[CC11]{MR2769871}
Luca Chiantini and Filip Cools.
\newblock Classification of {$(1,2)$}-{G}rassmann secant defective threefolds.
\newblock {\em Forum Math.}, 23(1):207--222, 2011.

\bibitem[CC01]{ChC}
Luca Chiantini and Marc Coppens.
\newblock Grassmannians of secant varieties.
\newblock {\em Forum Math.} 13:615-628, 2001.

\bibitem[CO11]{CO}
Luca Chiantini and Giorgio Ottaviani.
\newblock On generic identifiability of 3-tensors of small rank.
\newblock Preprint arXiv1103.2696v1, 03 2011.

\bibitem[CC08]{CC08}
Ciro Ciliberto and Filip Cools.
\newblock On {G}rassmann secant extremal varieties.
\newblock {\em Adv. Geom.}, 8(3):377--386, 2008.

\bibitem[Com02]{MR1931400}
Pierre Comon.
\newblock Tensor decompositions: state of the art and applications.
\newblock In {\em Mathematics in signal processing, {V} ({C}oventry, 2000)},
  volume~71 of {\em Inst. Math. Appl. Conf. Ser. New Ser.}, pages 1--24. Oxford
  Univ. Press, Oxford, 2002.

\bibitem[DF01]{DF}
Carla Dionisi and Claudio Fontanari.
\newblock Grassmann defectivity \`a la {T}erracini.
\newblock {\em Matematiche (Catania)}, 56(2):245--255 (2003), 2001.
\newblock PRAGMATIC, 2001 (Catania).

\bibitem[GKZ94]{GKZ}
Israel M. Gelfand, Mikhail Kapranov and Andrey V. Zelevinsky
\newblock {\em Discriminants, resultants, and multidimensional determinants}, 
\newblock  Birkhauser Boston Inc, 1994.

\bibitem[LC]{CL}
Lek-Heng Lim and Pierre Comon.
\newblock  Multiarray signal processing: Tensor decomposition meets compressed sensing
\newblock {\em C. R. Mecanique}, 338:311--320, 2010.

\bibitem[DL06]{DeL}
Lieven De~Lathauwer.
\newblock {A link between the canonical decomposition in multilinear algebra
  and simultaneous matrix diagonalization.}
\newblock {\em SIAM J. Matrix Anal. Appl.}, 28(3):642--666, 2006.

\bibitem[ERSS05]{ERSS}
Nicholas Eriksson, Kristian Ranestad, Bernd Sturmfels, and Seth Sullivant.
\newblock Phylogenetic algebraic geometry.
\newblock In {\em Projective varieties with unexpected properties}, pages
  237--255. Walter de Gruyter GmbH \& Co. KG, Berlin, 2005.

\bibitem[KB09]{KB}
Tamara~G. Kolda and Brett~W. Bader.
\newblock Tensor decompositions and applications.
\newblock {\em SIAM Rev.}, 51(3):455--500, 2009.

\bibitem[Kru77]{Kr}
Joseph~B. Kruskal.
\newblock Three-way arrays: rank and uniqueness of trilinear decompositions,
  with application to arithmetic complexity and statistics.
\newblock {\em Linear Algebra and Appl.}, 18(2):95--138, 1977.

\bibitem[Lic85]{Li}
Thomas Lickteig.
\newblock Typical tensorial rank.
\newblock {\em Linear Algebra Appl.}, 69:95--120, 1985.

\bibitem[Mel09]{Mella}
Massimiliano Mella.
\newblock Base loci of linear systems and the {W}aring problem.
\newblock {\em Proc. Amer. Math. Soc.}, 137(1):91--98, 2009.

\bibitem[Ter15]{Ter15}
Alessandro~Terracini.
\newblock Sulla rappresentazione delle coppie di forme ternarie mediante somme   di potenze di forme lineari.
\newblock {\em Ann. di Matem. pura ed appl.}, XXIV(III):91--100, 1915.




\end{thebibliography}

\end{document}